\def\subrangle#1{\stackengine{5pt}{}{$\!\scriptstyle #1$}{U}{l}{F}{F}{L}}
\newcommand{\norm}[1]{\left\lVert#1\right\rVert}
\newtheorem{theorem}{Theorem}[section]
\newtheorem{corollary}[theorem]{Corollary}
\newtheorem{lemma}[theorem]{Lemma}         
\newtheorem*{lemma*}{Lemma}         
\newtheorem{proposition}[theorem]{Proposition}
\newtheorem*{proposition*}{Proposition}
\theoremstyle{definition}
\newtheorem{definition}[theorem]{Definition}    
\newtheorem{remark}[theorem]{Remark}          
\numberwithin{equation}{section}
\newcommand{\C}{\mathbb C}
\newcommand{\A}{\mathbb A}
\newcommand{\oo}{\mathcal O}
\newcommand{\End}{\mathrm{End}}
\title{Adelic $C^{*}$-correspondences and parabolic induction}
\author[Goffeng]{Magnus Goffeng}
\address{\normalfont{Centre for Mathematical Sciences, Lund University \\
Box 118, SE-221 00 Lund, Sweden}}
\email{magnus.goffeng@math.lth.se}
\author[Mesland]{Bram Mesland}
\address{\normalfont{Mathematisch Instituut, Universiteit Leiden \\
Postbus 9512, 2300 RA Leiden, Netherlands}}
\email{b.mesland@math.leidenuniv.nl}
\author[\c{S}eng\"un]{Mehmet Haluk \c{S}eng\"un}
\address{\normalfont{School of Mathematical and Physical Sciences\\
University of Sheffield,
Hounsfield Road, Sheffield, S3 7RH, UK}}
\email{m.sengun@sheffield.ac.uk}
\begin{document}

\begin{abstract} 
In analogy with the factorization of representations of adelic groups as restricted products of representations of local groups, we study restricted tensor products of Hilbert $C^*$-modules and of $C^*$-correspondences. The construction produces global $C^*$-correspondences from compatible collections of local $C^{*}$-correspondences. When applied to the collection of $C^{*}$-correspondences capturing local parabolic induction, the construction produces a global $C^*$-correspondence that captures adelic parabolic induction.
\end{abstract}

 \maketitle
\tableofcontents
\section{Introduction}
Historically, $C^*$-algebras and representation theory share a common ancestry \cite{mvn, Segal-47, Segal-50, harcha, glimm61}. In the 1960's, representation theory of reductive groups over rings of adeles of number fields has started to acquire a central position in number theory and automorphic forms theory, and this motivated some $C^*$-algebraists to consider restricted tensor products of $C^*$-algebras \cite{Blackadar, Guichardet}. Recent years have seen several striking applications of the notion of $C^*$-correspondences from the theory of $C^*$-algebras to the theory of representations of reductive groups over local fields \cite{Clare-13, CCH-16, CCH-18, MS-24}. Motivated by these developments, we explore the notion of restricted tensor product in the setting of $C^*$-correspondences (and, as a first step, of Hilbert modules) with applications to representation theory of adelic reductive groups.

The main results of this paper are of a technical nature showing how to form the restricted tensor product of $C^*$-correspondences, followed by an example placing parabolic induction for adelic reductive groups in the context of $C^*$-correspondences. 
In more detail, the technical motivation for the paper is the following question. Consider two collections of $C^*$-algebras $(A_v)_{v\in I}$ and $(A'_v)_{v\in I}$ and a collection of $(A_v,A_v')$-correspondences $(X_v)_{v\in I}$. How do we patch these collections together in an infinite restricted tensor product? We answer this question in this paper by enhancing the $C^{*}$-algebras with appropriate projections $p_v\in A_v$, $p_v'\in A_v'$ and the modules with distinguished vectors $x_v\in X_v$, satisfying the conditions
$$\langle x_v, x_v \rangle_v = p_v, \qquad  p'_v{\cdot}x_v = x_v$$
for all but finitely many indices $v$.
With these auxillary data and the conditions at hand, we construct the restricted tensor product $\bigotimes_{v\in I}^{'}(X_{v},x_{v})$ as a $C^{*}$-correspondence for the pair of restricted tensor product $C^{*}$-algebras $(\bigotimes_{v\in I}'(A'_{v},p_{v}'),\bigotimes_{v\in I}'(A_{v},p_{v}))$.  Without the extra data of projections and distinguished vectors, a restricted tensor product would not make sense. 

To illustrate the utility of our construction, we turn to representation theory which was our main motivation as mentioned earlier. In \cite{Pierrot-01, Clare-13},  local parabolic induction functor has been interpreted as $C^*$-correspondence between suitable group $C^*$-algebras. This approach has been further studied and utilized in \cite{CCH-16,CCH-18}.  We show here that the adelic parabolic induction functor can be captured as the restricted tensor product of the local parabolic induction $C^*$-correspondences.  This can be seen as a $C^*$-algebraic counterpart of the local-global compatibility of parabolic induction. A further application of our results to global theta correspondence is pursued in the paper \cite{GMSTheta}.

The paper is organized as follows. In Section \ref{sec:back} we recall the relevant background on restricted tensor products of vector spaces, Hilbert spaces and $C^*$-algebras. Further details for adelic groups are discussed in Section \ref{sec:adelic}, with a particular focus on their representation theory and $C^*$-algebras. The main construction can be found in Section \ref{main-results}, where we construct the restricted tensor product of a collection of Hilbert $C^*$-modules with a distinguished vector chosen. In the cases of interest in representation theory, one considers restricted tensor products of a collection of not just of Hilbert $C^*$-modules but of $C^*$-correspondences and considerations thereof can be found in Section \ref{main-resultscorr}. We discuss applications to induction for adelic groups in Section \ref{sec:globalpara}.\\

{\bf Acknowledgements.} We are grateful to the referee whose detailed and insightful report provided many simplifications and strengthenings, especially in Section  \ref{sec:globalpara}.
%We are grateful to the referee whose detailed and insightful report provided many corrections, simplifications and strengthenings, especially in Section  \ref{sec:globalpara}.
\section{Background}
\label{sec:back}

The key technical tool in constructing the adeles as well as groups thereover comes from restricted products or tensor products. We here recall the incarnation of restricted (tensor) products in various classes of objects. The history of infinite tensor products can be traced back to von Neumann \cite{vonNeumann}. A good source for the materials below is \cite{Guichardet}. See also \cite{Blackadar}.

\subsection{Vector spaces} 
\label{vector-space} 

Let $(W_v)_{v \in I}$ be a family of vector spaces indexed by a countable index set $I$. Let $I_0$ be a fixed finite subset of $I$. For all $v \not \in I_0$, fix a vector $x_v \in W_v$. 
For a finite subset $S\subseteq I$ containing $I_0$, put $W_S := \bigotimes_{v \in S} W_v.$ Then for $S'=S \sqcup \{ v' \}$, we can consider the linear map
\begin{equation} \label{vs-embedding} W_S \rightarrow W_{S'}=W_S \otimes W_{v'}, \quad w \mapsto w \otimes x_{v'},
\end{equation}
which is an embedding if $x_{v'}$ is nonzero. 
Using these maps, we turn the collection $W_S$ with $S$ running over finite subsets of $I$ containing $I_0$ into a directed system and form
$$\sideset{}{'}\bigotimes_{v\in I} (W_v,x_v) := \varinjlim_S W_{S}=\bigcup_S W_S.$$

Note that $\sideset{}{'}\bigotimes_{v\in I} (W_v,x_v)$ is spanned by elements $w=\otimes_v w_v$ such that for all but finitely many $v$, we have $w_v$ is the distinguished vector $x_v$.

If $B_v : W_v \to W_v$ are linear maps such that $B_v(x_v)=x_v$ for all but finitely many $v$, we have a linear operator
$$\sideset{}{'}\bigotimes_{v\in I} B_v :  \sideset{}{'}\bigotimes_{v\in I} (W_v,x_v)  \longrightarrow \sideset{}{'}\bigotimes_{v\in I} (W_v,x_v), \qquad \otimes_{v} w_v \mapsto \otimes_v B_v(w_v).$$
Note that if infinitely many $x_v$are zero, then $\sideset{}{'}\bigotimes_{v\in I} (W_v,x_v)=0$. We will not consider this case in this paper as it will not arise in the applications of Section \ref{sec:globalpara}, however, 
it will arise in the companion paper \cite{GMSTheta} where we consider the global $\theta$-correspondence.

\subsection{Hilbert spaces} 
\label{hilbert-space} 

When the vector spaces in the preceding section are Hilbert spaces $\mathcal{H}_v=W_v$, then the restricted tensor product can be given a Hilbert space structure provided that the distinguished vectors $h_v=x_v$ are in the unit ball, and  $\|h_{v}\|=1$ for all but finitely of them. 
Then the maps (\ref{vs-embedding}) are isometries for all but finitely many indices  and the direct limit $\varinjlim_{S} \mathcal{H}_{S}=\bigcup_S \mathcal{H}_S$ carries a canonical pre-Hilbert space structure. We denote its completion as 
$$\sideset{}{'}\bigotimes_{v\in I} (\mathcal{H}_v,h_v).$$

Note that for simple tensors $e=\otimes_{v}e_v,f=\otimes_{v}f_v \in \bigcup_S \mathcal{H}_S$, we have
$$\langle e,f \rangle = \prod_{v} \langle e_v, f_v \rangle\subrangle{v}$$
which is well defined because for all but finitely many indices, we have $e_v=f_v=h_v$ so that 
$$ \langle e_v, f_v \rangle\subrangle{v} =  \langle h_v, h_v \rangle\subrangle{v}=1$$
by our assumption that $h_v$ are unit vectors.

If $B_v : \mathcal{H}_v \to \mathcal{H}_v'$ are bounded linear maps between two collections of Hilbert spaces, with distinguished unit vectors $(h_v)_{v \not \in I_0}$ and $(h_v')_{v \not \in I_0}$, such that 
\begin{enumerate}
\item $B_v(h_v)=h_v'$ for all but finitely many $v$, 
\item $\norm{B_v} \leq 1$ for all but finitely many $v$, 
\end{enumerate}
then the linear map
\begin{equation}
\label{resttensofops}
\sideset{}{'}\bigotimes_{v\in I} B_v :  \sideset{}{'}\bigotimes_{v\in I} (\mathcal{H}_v,h_v) \longrightarrow \sideset{}{'}\bigotimes_{v\in I} (\mathcal{H}'_v,h'_v), \qquad \otimes_v w_v \mapsto \otimes_v B_v(w_v),
\end{equation}
is bounded as well. Note that the two assumptions above in fact imply that $\norm{B_v} = 1$ for all but finitely many $v$, and so 
\begin{equation}
\label{prodnorm}
\left\|\sideset{}{'}\bigotimes_{v\in I} B_v\right\|=\prod_{v\in I} \|B_v\|,
\end{equation}
where the factors in the product are $1$ for all but finitely many $v$. It follows from Proposition \ref{comaokdapdops} below that all compact operators $\sideset{}{'}\bigotimes (\mathcal{H}_v,h_v) \longrightarrow \sideset{}{'}\bigotimes (\mathcal{H}'_v,h'_v)$ can be approximated in norm by linear combinations of operators of the form \eqref{resttensofops} (for $B_v$ compact). The analogous statement fails for bounded operators.

\subsection{$C^*$-algebras} 
\label{basic-family} 

Let $(A_v)_{v \in I}$ be a collection of $C^*$-algebras indexed by a countable set $I$. The $C^*$-algebras $A_v$'s will in our applications be type I and  nuclear, but in this section we state explicitly when such assumptions are used. We shall use $\otimes$ to denote the spatial tensor product of $C^{*}$-algebras. Assume that for all $v$ not in a finite subset $I_0$, we have a distinguished nonzero projection $p_v \in A_v$ that we fix. By abuse of notation, we will denote such a family simply by 
$$(A_v,p_v)_{v \in I}.$$

Given a finite subset $S$ of $I$ containing $I_0$, we define the $C^*$-algebra 
$$A_S:=\bigotimes_{v \in S} A_v.$$
Then for $S'= S \sqcup \{ v' \}$, we form the $*$-homomorphism
\begin{equation}  
\label{aftoafprime}
A_S \hookrightarrow A_{S'}= A_S \otimes A_{v'}, \quad a \mapsto a \otimes p_{v'},
\end{equation}
which is injective, hence isometric.
These turn the collection $(A_S)_S$, for $S$ ranging over finite subsets of $I$ containing $I_0$
into a directed system of $C^*$-algebras. We define the restricted product of the collection $(A_v, p_v)_{v \in I}$ as the direct limit of this directed system
$$\sideset{}{'}{\bigotimes}_{v \in I} (A_v,p_v):=\varinjlim_S A_S.$$

 The direct limit is taken in the category of $C^*$-algebras and we can describe the $C^*$-algebra structure explicitly. The direct limit $\varinjlim_S A_S$ is defined as the closure of $\bigcup_{S} A_S$ in the $C^*$-norm defined from each $A_S$. This is well defined since for such $S,S'$ the map in \eqref{aftoafprime} is isometric.

\subsection{Representations} 

Let $(A_v,p_v)_{v \in I}$ be as in Section \ref{basic-family}. Given $v$, let us fix a representation $(\pi_v, \mathcal{H}_{v})$ of $A_v$, that is, $*$-homomorphisms
$$\pi_v : A_v \to \mathbb{B}(\mathcal{H}_{v})$$
for Hilbert spaces $\mathcal{H}_{v}$. 

If for all $v \not \in I_0$, we fix unit vectors $h_v \in \mathcal{H}_{v}$ such that $\pi_v(p_v)(h_v)=h_v$ then we can, using the norm  computation \eqref{prodnorm}, form a representation 
\begin{equation}
\label{defofpi}
\pi:= \sideset{}{'}\bigotimes_{v\in I} \pi_v \ : \sideset{}{'}\bigotimes_{v\in I} (A_v,p_v) \longrightarrow \mathbb{B}\left (\sideset{}{'}\bigotimes_{v\in I} (\mathcal{H}_{v},h_v) \right ),
\end{equation}
as follows: given $a=\otimes_{v}a_v \in \sideset{}{'}\bigotimes (A_v,p_v)$ and $w=\otimes_{v}w_v \in \sideset{}{'}\bigotimes (\mathcal{H}_{v},h_v) $, we define $\pi(a)(w)$ via the rule
$$\pi(a)(w) := \otimes_{v}\pi_v(a_v)(w_v).$$
At all but finitely many indices $v$, we have $a_v=p_v$ and $w_v=h_v$, and thus $\pi_v(a_v)(w_v)=w_v$. It follows that $\pi(a)(w)$ belongs to $\sideset{}{'}\bigotimes (\mathcal{H}_{v},h_v)$, and from the discussion in Section \ref{hilbert-space} we deduce that $\pi(a)$ extends by linearity and continuity to a bounded linear operator. This construction extends by linearity and continuity to all of $\sideset{}{'}\bigotimes_{v\in I} (A_v,p_v)$ as claimed in \eqref{defofpi}.

The representation $\pi$ is irreducible if all the $\pi_v$ are irreducible. Modifying the fixed vectors $h_v$ at finitely many indices $v$ does not change the isomorphism class of $\pi$. The same is true if we replace $h_v$'s with a scalar multiple. Thus, if the projections $\pi_v(p_v)$ all have rank one for all but finitely many $v$, then $\pi$ does not depend on the choice of $h_v$. 

\begin{definition}[Definition 12 in \cite{Guichardet}] A projection $p$ in a $C^*$-algebra $A$ has {\em rank at most one} if for every irreducible representation $(\pi, \mathcal{H})$ of $A$, the projection $\pi(p)$ has rank at most one. 
\end{definition}
The next result is due to Guichardet (see \cite[Section 13]{Guichardet}).

\begin{theorem} 
\label{factorization-Cstar} 
Let $(A_v,p_v)_{v \in I}$ be a collection of Type I $C^*$-algebras with all but finitely many of the distinguished projections $p_v$ having rank at most one. Every irreducible representation of $\sideset{}{'}\bigotimes (A_v,p_v)$ is equivalent to a restricted tensor product as in \eqref{defofpi} of irreducible representations $\pi_v$ of $A_v$, $v\in I$.
\end{theorem}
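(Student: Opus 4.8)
The plan is to reduce the infinite problem to finite tensor factorizations and then reassemble everything along the directed system, using the rank-at-most-one hypothesis to pin down the distinguished vectors. Write $A:=\sideset{}{'}\bigotimes_{v}(A_v,p_v)=\varinjlim_F A_F$, let $S_0=\{v:p_v=0\}$ (finite, by the standing assumption of Section \ref{basic-family}), and let $E$ be the finite set of $v$ for which $p_v$ fails to have rank at most one. Fix an irreducible representation $(\pi,\mathcal H)$. For each finite $S\supseteq S_0$ the finite subsets $F\supseteq S$ are cofinal and split as $F=S\sqcup F'$ with $F'\subseteq I\setminus S$; since the spatial tensor product with the fixed factor $A_S$ commutes with the direct limit, this yields an isomorphism $A\cong A_S\otimes B_S$ with $A_S=\bigotimes_{v\in S}A_v$ and $B_S=\sideset{}{'}\bigotimes_{v\notin S}(A_v,p_v)$. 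A finite spatial tensor product of Type I algebras is again Type I, so $A_S$ is Type I, and by the classical factorization of irreducible representations of a tensor product having a Type I factor (see \cite{Guichardet}) we get $\pi\cong\sigma_S\otimes\tau_S$ with $\sigma_S\in\widehat{A_S}$ and $\tau_S\in\widehat{B_S}$ irreducible. Applying the finite Type I factorization once more gives $\sigma_S\cong\bigotimes_{v\in S}\rho_v^{S}$ for irreducible representations $\rho_v^S$ of $A_v$.

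First I would establish coherence: for $S\subseteq S'$ the associativity $A=A_S\otimes A_{S'\setminus S}\otimes B_{S'}$ together with the essential uniqueness of the Type I factorization forces $\rho_v^{S'}\cong\rho_v^{S}$ for $v\in S$, producing a single irreducible $\pi_v\in\widehat{A_v}$ for every $v$ with $\sigma_S\cong\bigotimes_{v\in S}\pi_v$. Next I would show that $\pi_v(p_v)\neq 0$ for every $v\notin S_0$. Fixing such a $v$, take $S=S_0\cup\{v\}$ and $S^-=S_0=S\setminus\{v\}$; the defining connecting map of the direct limit gives $\iota_{S}(a\otimes p_v)=\iota_{S^-}(a)$ for $a\in A_{S^-}$, whereas in the factorization one has $\pi(\iota_S(a\otimes p_v))=(\sigma_{S^-}(a)\otimes\pi_v(p_v))\otimes 1_{\tau_S}$ upon extending $\tau_S$ to multipliers. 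If $\pi_v(p_v)$ were $0$ this would force $\pi\circ\iota_{S^-}=0$, hence $\sigma_{S^-}=0$, contradicting irreducibility. Combined with the hypothesis that $\pi_v(p_v)$ has rank at most one for $v\notin E$, we conclude that $\pi_v(p_v)$ is a rank-one projection for every $v\notin S_0\cup E$, that is, for all but finitely many $v$.

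For $v\notin S_0\cup E$ let $h_v$ be a unit vector spanning the one-dimensional range of $\pi_v(p_v)$ (determined up to a phase), and for the finitely many remaining $v$ choose any unit vector $h_v$. These are exactly the distinguished vectors of the candidate representation $\sideset{}{'}\bigotimes_v(\pi_v,h_v)$ on $\sideset{}{'}\bigotimes_v(\mathcal H_v,h_v)$, and by construction $\pi_v(p_v)h_v=h_v$ for all but finitely many $v$, as required in \eqref{defofpi}. It then remains to produce a unitary $U:\sideset{}{'}\bigotimes_v(\mathcal H_v,h_v)\to\mathcal H$ intertwining the two representations. I would assemble $U$ from the finite equivalences $\sigma_S\cong\bigotimes_{v\in S}\pi_v$: for each $S\supseteq S_0\cup E$ this identifies the summand $\bigotimes_{v\in S}\mathcal H_v$ of the algebraic restricted tensor product with a subspace $\mathcal H_{\sigma_S}\otimes\Omega_S\subseteq\mathcal H$, where $\Omega_S\in\mathcal H_{\tau_S}$ is a unit ``tail vacuum'' corresponding to $\bigotimes_{v\notin S}h_v$, and one must check that these partial identifications are compatible with the connecting maps $\xi\mapsto\xi\otimes h_v$.

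The crux of the argument, and the step I expect to be the main obstacle, is precisely the canonical existence and uniqueness of the tail vacuum $\Omega_S$, which is where the rank-one conclusion is indispensable: for $v\notin S$ the projection $\tau_S(\iota_{\{v\}}(p_v))$ has one-dimensional range, so the common fixed space of these commuting projections is forced to be one-dimensional, spanned by the vector matching $\bigotimes_{v\notin S}h_v$ and independent of the chosen phases. This rigidity is exactly what makes the finite identifications glue coherently into a single well-defined isometry $U$ with dense range; nondegeneracy of $\pi$ together with irreducibility then upgrade $U$ to a surjection, and it intertwines the actions by construction. Making the uniqueness of $\Omega_S$ rigorous for the abstract space $\mathcal H_{\tau_S}$ (rather than presupposing it is already a restricted tensor product) is the delicate point, and I would handle it by a bootstrapping argument applying the factorization to $B_S$ within the direct limit, the rank-one hypothesis guaranteeing that no higher-dimensional fixed space can occur in the tail.
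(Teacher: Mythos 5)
The paper offers no proof of this theorem at all --- it is quoted from Guichardet \cite[Section 13]{Guichardet} --- so your attempt can only be judged on its own terms. Its skeleton (split $A\cong A_S\otimes B_S$ with $B_S=\sideset{}{'}\bigotimes_{v\notin S}(A_v,p_v)$, invoke the Type I tensor-product factorization, check coherence, then glue) is the natural one, but it contains two genuine problems.

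First, your claim that $\pi_v(p_v)\neq 0$ for \emph{every} $v\notin S_0$ is false, and the argument offered for it is broken. Vanishing of $\pi$ on the proper subalgebra $\iota_{S^-}(A_{S^-})$ does not contradict irreducibility: an irreducible representation of a direct limit may vanish on a small member of the system and be nonzero only on larger ones. Moreover, under $A\cong A_{S^-}\otimes B_{S^-}$ one has $\iota_{S^-}(a)=a\otimes q_{S^-}$, where $q_{S^-}\in B_{S^-}$ is the tail projection (the common image of all the $p_T$, $T\subseteq I\setminus S^-$ finite), so $\pi(\iota_{S^-}(a))=\sigma_{S^-}(a)\otimes\tau_{S^-}(q_{S^-})$: its vanishing implicates $\tau_{S^-}(q_{S^-})$, not $\sigma_{S^-}$. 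Concretely, take $A_v=\C\oplus\C$, $p_v=(1,0)$ for all $v$; then $A\cong C_0(Y)$ with $Y$ the (discrete, countable) set of $0$-$1$ sequences that are eventually $0$, and the character at a sequence with $x_v=1$ at one place is an irreducible representation with $\pi_v(p_v)=0$ there. What is true, and all the theorem needs, is the statement for all but finitely many $v$: pick a finite $S_1$ with $\pi\circ\iota_{S_1}\neq 0$ (possible since $\bigcup_S\iota_S(A_S)$ is dense and $\pi\neq 0$), deduce $\tau_{S_1}(q_{S_1})\neq 0$, and factor through any larger $S'$ to conclude $\pi_v(p_v)\neq 0$ for all $v\notin S_1$.

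Second, and more seriously, the step you yourself identify as the crux --- that the tail vacuum $\Omega_S$ exists and is unique, i.e.\ that $\tau_S(q_S)$ is a rank-one projection --- is never proved; ``bootstrapping'' is not an argument, and your assertion that $\tau_S(\iota_{\{v\}}(p_v))$ has one-dimensional range is circular, since $\tau_S(\iota_{\{v\}}(p_v))=\pi_v(p_v)\otimes\tau_{S\cup\{v\}}(q_{S\cup\{v\}})$ presupposes exactly the one-dimensionality of the next tail's vacuum. The missing idea is corner commutativity: a projection of rank at most one has \emph{commutative} corner $p_vA_vp_v$, because every irreducible representation of this hereditary subalgebra is the compression of an irreducible representation of $A_v$ to the range of $\pi_v(p_v)$, hence at most one-dimensional. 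Consequently $q_SB_Sq_S=\overline{\bigcup_T\bigotimes_{v\in T}p_vA_vp_v}$ is a commutative $C^*$-algebra, and the compression of the irreducible $\tau_S$ to the hereditary subalgebra $q_SB_Sq_S$ is an irreducible representation of a commutative algebra on $\tau_S(q_S)\mathcal{H}_{\tau_S}$, forcing $\dim\tau_S(q_S)\mathcal{H}_{\tau_S}=1$. This is the heart of Guichardet's theorem (and is precisely where the Gelfand-pair/commutative Hecke algebra condition enters in the group-theoretic applications); without it, your finite identifications cannot be glued into the unitary $U$, and the proof is incomplete at its central point.
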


The reader should note that it is crucial that we use the spatial tensor product in Theorem \ref{factorization-Cstar}. For the maximal tensor product there are non-factorizable irreducible representations.

%%%%%%%%%%%%%%%
\section{Adelic groups}
\label{sec:adelic}

We now turn to discuss how restricted products of groups produce adelic groups and describe their representation theory. Some useful sources are \cite{Flath}, \cite[Section 3]{Gelfand_etal} and \cite[Sections 5,6]{Moore}.

\subsection{Restricted products of groups} 
\label{setup} 
Let $(G_v)_{v \in I}$ be a countable collection of locally compact, second countable, Hausdorff groups. We assume that each $G_v$ is a Type I group, that is, its $C^*$-algebra $C^*(G_v)$ is of Type I. Since each group $G_v$ is Type I, each of the $C^*$-algebras $C^*(G_v)$ are nuclear by \cite{patte}. Let us fix subgroups $K_{v}\subset G_{v}$ for all $v$ outside a finite subset $I_0$.  By abuse of notation, we will denote such a collection simply by
$$(G_v,K_{v})_{v\in I}.$$

\begin{definition}
\label{def:adm} 
A collection $(G_v,K_{v})_{v\in I}$ of groups as above is called an \emph{admissible family} if each $v \not \in I_0$, the subgroup $K_{v}$ is  {\em compact open} in $G_v$. 
\end{definition}
The {\em restricted product} of an admissible family $(G_v,K_v)_{v \in I}$ is defined as
\begin{eqnarray*}
\sideset{}{'}\prod_{v\in I} (G_v : K_v) & :=& \left \{ (g_v)_v \in {\prod}_{v \in I} G_v \mid g_v \in K_v \ \ \textrm{for all but finitely many} \ v \right \} \\ 
& =& \bigcup_{S} \  \prod_{v \in S} G_v \times \prod_{v \not \in S} K_{v}
\end{eqnarray*}
where $S$ varies over finite subsets of $I$ containing $I_0$. Equipping with the direct limit topology, we obtain a locally compact, Hausdorff topological group. 

In the rest of the paper, we will put
\begin{equation} \label{G_S} G_S:=\prod_{v \in S} G_v \times \prod_{v \not \in S} K_{v}
\end{equation}
so that 
$$\sideset{}{'}\prod_{v\in I} (G_v : K_v) =  \bigcup_{S} G_S.$$
Note that each $G_S$ is an open subgroup of $\sideset{}{'}\prod (G_v : K_v)$.

\subsection{Representations of restricted products of groups} 

Let $(G_v,K_v)_{v \in I}$ be an admissible family of groups as in Definition \ref{def:adm}. Let unitary representations
$$\pi_v : G_v \to {\mathcal U}(\mathcal{H}_v)$$
be given. Assume that for all $v \not \in I_0$, there exists a unit vector $h_v \in \mathcal{H}_v$ such that 
\begin{center} $h_v$ is fixed by $K_v$.
\end{center}
Using the previous constructions, we can put the $\pi_v$'s together
$$\pi := \sideset{}{'}\bigotimes \pi_v.$$
This is an irreducible unitary representation of $\sideset{}{'}\prod (G_v : K_v)$ afforded on the Hilbert space $\sideset{}{'}\bigotimes (\mathcal{H}_v,h_v)$. 

In order to describe the dependence of $\pi$ on the distinguished vectors, we recall the notion of \emph{Gelfand pair}. Let $G$ be a topological group, $K\subset G$ a compact subgroup and $\mathcal{H}(G,K)$ the Hecke algebra, that is, the convolution algebra of continuous, compactly supported bi-$K$-invariant complex valued functions on $G$. Then $(G,K)$ is called a Gelfand pair if $\mathcal{H}(G,K)$ is commutative.

Two collections $(h_v)_{v \not \in I_0}$, $(h'_v)_{v \not \in I_0}$ of distinguished vectors give rise to isomorphic representations if $h_v$ is a scalar multiple of $h'_v$ for all but finitely many $v$. Thus the following proposition (see e.g. \cite[Proposition 6.3.1]{vanDijk_book}) is relevant.

\begin{proposition} 
\label{aodnaodjna}
Let $(G,K)$ be a Gelfand pair. For any irreducible unitary $G$-module $V$, the subspace $V^{K}$ of $K$-fixed vectors is at most one-dimensional. 
\end{proposition}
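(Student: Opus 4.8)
The plan is to exploit the commutativity of the Hecke algebra $\mathcal{H}(G,K)$ by making it act on the space $V^{K}$ of $K$-fixed vectors and showing that this action is irreducible; since an irreducible $*$-representation of a commutative algebra on a Hilbert space is one-dimensional, this is exactly the desired conclusion. We may assume $V^{K}\neq 0$, the other case being trivial.

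First I would integrate the unitary representation $(\pi,V)$ to the convolution algebra $C_c(G)$ of continuous compactly supported functions on $G$, setting $\pi(f)=\int_{G} f(g)\pi(g)\,dg$. Normalising the Haar measure on the compact group $K$ to have total mass one, the averaging operator $P=\int_{K}\pi(k)\,dk$ is the orthogonal projection of $V$ onto $V^{K}$. For a bi-$K$-invariant $f\in\mathcal{H}(G,K)$, left-invariance gives $\pi(k)\pi(f)=\pi(f)$ and right-invariance gives $\pi(f)\pi(k)=\pi(f)$ for all $k\in K$, whence $\pi(f)=P\pi(f)P$. Thus each $\pi(f)$ preserves $V^{K}$ and annihilates its orthogonal complement, so restriction defines an algebra homomorphism $\mathcal{H}(G,K)\to\mathbb{B}(V^{K})$. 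Since $f^{*}(g)=\overline{f(g^{-1})}$ is again bi-$K$-invariant and $\pi(f)^{*}=\pi(f^{*})$, this is a $*$-representation, and its image is commutative because $\mathcal{H}(G,K)$ is.

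The crux is to prove that $V^{K}$ is an irreducible $\mathcal{H}(G,K)$-module. Suppose $W\subseteq V^{K}$ is a nonzero closed invariant subspace; because the action is a $*$-representation, the orthogonal complement $W'=W^{\perp}\cap V^{K}$ is also invariant, and I aim to show $W'=0$. For $w\in W$, $w'\in W'$ the matrix coefficient $g\mapsto\langle\pi(g)w,w'\rangle$ is continuous and bi-$K$-invariant, and invariance of $W$ forces $\int_{G}f(g)\langle\pi(g)w,w'\rangle\,dg=\langle\pi(f)w,w'\rangle=0$ for every $f\in\mathcal{H}(G,K)$. Testing against bi-$K$-invariant, compactly supported approximations of the complex conjugate of this matrix coefficient forces $\langle\pi(g)w,w'\rangle\equiv 0$; hence $w$ is orthogonal to the closed $G$-span of $w'$, which by irreducibility of $V$ is all of $V$, giving $w=0$. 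Therefore $W=V^{K}$, so the module is irreducible. Finally, an irreducible $*$-representation of a commutative algebra has image contained in the scalars (each operator lies in the commutant of the irreducibly acting image, hence is scalar by Schur), and scalars act irreducibly only on a space of dimension at most one, so $\dim V^{K}\leq 1$.

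I expect the main obstacle to be the irreducibility step, specifically the passage from the vanishing of $\langle\pi(f)w,w'\rangle$ against all Hecke operators to the pointwise vanishing of the matrix coefficient: this requires that bi-$K$-invariant compactly supported functions be rich enough to separate, which I would justify by averaging a compactly supported cut-off of $\overline{\langle\pi(\cdot)w,w'\rangle}$ over $K\times K$ and using continuity. A cleaner but less elementary alternative replaces this step by a von Neumann algebra argument: the integrated form $\pi(C_c(G))$ is weakly dense in $\pi(G)''=\mathbb{B}(V)$ by irreducibility, compression by $P$ gives $P\pi(C_c(G))P=\pi(\mathcal{H}(G,K))$, and weak continuity of compression yields $\overline{\pi(\mathcal{H}(G,K))}^{\,w}=P\,\mathbb{B}(V)\,P=\mathbb{B}(V^{K})$; commutativity of $\mathcal{H}(G,K)$ then forces $\mathbb{B}(V^{K})$ to be commutative, i.e.\ $\dim V^{K}\leq 1$.
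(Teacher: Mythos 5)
The paper offers no proof of this proposition: it is quoted as a known fact with a citation to van Dijk [Prop.\ 6.3.1], so there is no internal argument to compare against. Your proof is correct and is essentially the standard argument behind that cited result: compress the integrated representation to $V^{K}$ via $P=\int_{K}\pi(k)\,dk$, show the commutative Hecke algebra acts topologically irreducibly on $V^{K}$ (your matrix-coefficient separation step, with the bi-$K$-averaged cut-off $f=\overline{\phi}\cdot u^{K}$, does this correctly, as does your von Neumann algebra alternative via $P\,\mathbb{B}(V)\,P=\mathbb{B}(V^{K})$), and conclude $\dim V^{K}\le 1$ by Schur's lemma. The only cosmetic caveat is that the involution $f^{*}(g)=\overline{f(g^{-1})}$ presumes $G$ unimodular; in general one must include the modular factor $\Delta_{G}(g^{-1})$, which is harmless here since it preserves bi-$K$-invariance (as $\Delta_{G}|_{K}\equiv 1$), and indeed Gelfand pairs are automatically unimodular.
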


Thus if all but finitely many $(G_v,K_v)$ form a Gelfand pair, then the restricted product representation $\pi$ will be independent of the chosen collection of distinguished vectors. 

In the converse direction, we have the following factorization result. 

\begin{theorem} 
\label{factorization} 
Let $(G_v,K_v)_{v \in I}$ be an admissible family of groups such that  $(G_{v},K_{v})$ is a Gelfand pair for all but finitely many $v$. Every irreducible unitary representation 
$\pi$ of the restricted product group $\sideset{}{'}{\prod} (G_v : K_v)$ is factorizable into local unitary irreducible representations: 
$$\pi \simeq \sideset{}{'}\bigotimes \pi_v.$$ 
The isomorphism classes of the unitary representations $(\pi_v, \mathcal{H}_v)$ are determined by that of $\pi$. For all but finitely many $v$, the dimension of $\mathcal{H}_v^{K_v}$ is one.
\end{theorem}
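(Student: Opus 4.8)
The plan is to deduce the theorem from Guichardet's $C^*$-algebraic factorization result, Theorem \ref{factorization-Cstar}, after repackaging the unitary representation theory of $\sideset{}{'}\prod(G_v:K_v)$ as the representation theory of an associated restricted tensor product $C^*$-algebra. For each $v\notin E$ I would work with the idempotent $p_v:=\mathrm{vol}(K_v)^{-1}\mathbf{1}_{K_v}\in C_c(G_v)\subseteq C^*(G_v)$, which is a genuine self-adjoint projection because $K_v$ is compact open; in any unitary representation $(\pi_v,\mathcal{H}_v)$ of $G_v$ its image $\pi_v(p_v)$ is the orthogonal projection onto the space $\mathcal{H}_v^{K_v}$ of $K_v$-fixed vectors. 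The first point is that the Gelfand pair hypothesis is precisely what forces these projections to have rank at most one: for all but finitely many $v$, Proposition \ref{aodnaodjna} gives $\dim\mathcal{H}_v^{K_v}\le 1$ for every irreducible unitary $G_v$-module, and since the irreducible representations of $C^*(G_v)$ are exactly the irreducible unitary representations of $G_v$, this says that $p_v$ has rank at most one in the sense required by Theorem \ref{factorization-Cstar}.

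Second, I would isolate the finitely many exceptional places by writing $G:=\sideset{}{'}\prod(G_v:K_v)=G_E\times G_f$, with $G_E=\prod_{v\in E}G_v$ and $G_f=\sideset{}{'}\prod_{v\notin E}(G_v:K_v)$ totally disconnected. Using nuclearity of the $C^*(G_v)$ and the factorization of irreducible representations over finite spatial tensor products of Type I algebras, any irreducible $\pi$ of $G$ decomposes as $\pi\cong\pi_E\otimes\pi_f$, so it remains to factorize the irreducible representation $\pi_f$ of $G_f$ and absorb the finite factor $\pi_E\cong\bigotimes_{v\in E}\pi_v$ at the end. The key structural identification is that the restricted tensor product $\mathcal{A}:=\sideset{}{'}\bigotimes_{v\notin E}(C^*(G_v),p_v)$ is a subalgebra of $C^*(G_f)$: for finite $S\supseteq E$, let $q_S\in C^*(G_f)$ be the projection onto $\mathbf{K}^S$-invariants, where $\mathbf{K}^S=\prod_{v\notin S}K_v$. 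A bi-$\mathbf{K}^S$-invariant function is independent of its coordinates at $v\notin S$, so the corner $q_SC^*(G_f)q_S$ is canonically isomorphic to $\bigotimes_{v\in S\setminus E}C^*(G_v)$, and under these isomorphisms the corner inclusions become the maps $a\mapsto a\otimes p_k$ of \eqref{aftoafprime}; passing to the limit gives $\mathcal{A}\cong\overline{\bigcup_S q_SC^*(G_f)q_S}$.

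Third, I would show that $\pi_f$ restricts to a nonzero, hence irreducible and nondegenerate, representation of $\mathcal{A}$. Here the density of smooth vectors for the totally disconnected group $G_f$ is decisive: every open subgroup contains some $\mathbf{K}^S$, so the smooth vectors are $\bigcup_S\mathcal{H}^{\mathbf{K}^S}=\bigcup_S q_S\mathcal{H}$, and their density forces $q_S\neq 0$, i.e. $\mathcal{H}^{\mathbf{K}^S}\neq 0$, for all sufficiently large $S$. In particular $\mathcal{H}^{K_v}\neq 0$ for every $v\notin S$, which already establishes the asserted sphericity at almost all places. Applying Theorem \ref{factorization-Cstar} to $\pi_f|_{\mathcal{A}}$ then yields irreducible representations $\pi_v$ of $C^*(G_v)$, determined up to isomorphism, with $\pi_f\cong\sideset{}{'}\bigotimes_{v\notin E}\pi_v$ as $G_f$-representations via the standard dictionary between smooth unitary representations of $G_f$ and nondegenerate representations of $\mathcal{A}$; combining with $\pi_E$ gives $\pi\cong\sideset{}{'}\bigotimes_{v\in I}\pi_v$. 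Finally, the Hilbert space $\sideset{}{'}\bigotimes(\mathcal{H}_v,h_v)$ is built on distinguished unit vectors $h_v\in\mathcal{H}_v^{K_v}$ for almost all $v$, so $\dim\mathcal{H}_v^{K_v}\ge 1$ there; together with the rank-at-most-one bound of the first step this forces $\dim\mathcal{H}_v^{K_v}=1$ for all but finitely many $v$.

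The main obstacle is the middle step, namely the identification of $\mathcal{A}$ with the corner-limit subalgebra of $C^*(G_f)$ together with the claim that restriction gives an irreducibility-preserving dictionary between representations of $G_f$ admitting spherical vectors and representations of $\mathcal{A}$. The naive hope that $C^*(G_f)$ is itself a restricted tensor product of the form $\sideset{}{'}\bigotimes(C^*(G_v),p_v)$ fails, because in the inductive limit the tail factors $C^*(K_v)$ do not collapse onto $\mathbb{C}p_v$; the hypotheses of Theorem \ref{factorization-Cstar} only materialize after passing to the $\mathbf{K}^S$-invariant corners, equivalently to smooth vectors. Checking that the commutants of $\pi_f$ and of $\pi_f|_{\mathcal{A}}$ coincide, so that irreducibility transfers in both directions, is the technical heart of the argument.
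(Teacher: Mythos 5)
Your outer frame coincides with the paper's intended argument: translate to group $C^*$-algebras, use Proposition \ref{aodnaodjna} to turn the Gelfand-pair hypothesis into the statement that $p_{K_v}$ has rank at most one for all but finitely many $v$, and invoke Guichardet's Theorem \ref{factorization-Cstar}. The problem is your middle step, which contains two concrete errors. First, a bi-$\mathbf{K}^S$-invariant function on $G_f$ is \emph{not} independent of its coordinates at $v\notin S$: it is only bi-$K_v$-invariant there, so it can have an arbitrary compactly supported Hecke-algebra component at each such place (for example $\chi_{K_v g_v K_v}$ with $g_v\notin K_v$; for $G_v={\rm GL}_2(\Q_p)$, $K_v={\rm GL}_2(\Z_p)$ the spherical Hecke algebra is infinite dimensional). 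Hence $q_SC^*(G_f)q_S$ is not $\bigotimes_{v\in S\setminus E}C^*(G_v)$; it contains the corner $p_{K_v}C^*(G_v)p_{K_v}$ at every $v\notin S$ (moreover $q_S$ is only a multiplier of $C^*(G_f)$, since $\mathbf{K}^S$ is not open). Second, the ``naive hope'' you dismiss is in fact true, and it is exactly Lemma \ref{factorization-grouprep} of the paper: $C^*(G_f)\cong\sideset{}{'}\bigotimes_{v\notin E}(C^*(G_v),p_{K_v})$. Your objection that the tail factors $C^*(K_v)$ do not collapse onto $\C p_{K_v}$ overlooks Proposition \ref{propdecomomd}: every irreducible representation of the compact group $K_S=\prod_{v\notin S}K_v$ is trivial at all but finitely many coordinates (no small subgroups plus Peter--Weyl), so $C^*(K_S)$ is itself the restricted tensor product $\sideset{}{'}\bigotimes_{v\notin S}(C^*(K_v),\epsilon_{K_v})$. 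Feeding this into the Baum--Millington--Plymen limit $C^*(G_f)=\varinjlim_S C^*(G_S)$ of Lemma \ref{bmplem} shows that $\bigcup_S\bigl(\bigotimes_{v\in S}C^*(G_v)\bigr)\otimes q_S$ is already dense in $C^*(G_f)$; the collapse happens inside $C^*(K_S)$ itself, not through the connecting maps.

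These errors are what generate the gap that you flag but do not close: in your picture $\mathcal{A}$ is a proper subalgebra of $C^*(G_f)$, so you must prove that $\pi_f|_{\mathcal{A}}$ is irreducible and that the passage $\pi_f\mapsto\pi_f|_{\mathcal{A}}$ is a bijection on isomorphism classes --- your ``technical heart'' --- and no argument is offered. (Your density-of-smooth-vectors step also assumes $G_f$ is totally disconnected, which Definition \ref{def:adm} does not guarantee: for $v\notin E$ the group $G_v$ may have a nontrivial connected component inside $K_v$; density of $\bigcup_S q_S\mathcal{H}$ should instead be obtained by decomposing $\pi|_{\mathbf{K}^S}$ under Peter--Weyl, i.e.\ by the same fact as above.) All of this evaporates once the correct identification is in place: $\mathcal{A}$ is all of $C^*(G_f)$, restriction to it changes nothing, irreducible unitary representations of $\sideset{}{'}\prod(G_v:K_v)$ correspond exactly to irreducible representations of $\sideset{}{'}\bigotimes_v(C^*(G_v),p_{K_v})$, and Theorem \ref{factorization-Cstar} then delivers the factorization, the uniqueness, and (via the distinguished vectors $h_v\in\mathcal{H}_v^{K_v}$ combined with your rank bound) $\dim\mathcal{H}_v^{K_v}=1$ at almost all places. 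That is the paper's proof. To repair yours, delete the corner construction and instead prove or cite Lemma \ref{factorization-grouprep}; your first and last paragraphs then go through unchanged.
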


We note that Theorem \ref{factorization} follows from Guichardet's Theorem \ref{factorization-Cstar}. There is an analogous factorization theorem of Flath \cite{Flath} (see also \cite[Section 5.7]{getzhahn}) for ``admissible'' irreducible representations of a reductive group over the adeles. Such representations form a larger class that contain the unitary ones and accordingly one has to apply the strategy of the proof of Guichardet's Theorem to the Hecke algebra of the reductive group in order to achieve the factorization.

\subsection{Adeles and groups over adeles} \label{adelic_groups} Let $F$ be a number field. If $v$ is a place of $F$, let $F_v$ denote the completion of $F$ at $v$. All $F_v$ are locally compact. Let $I_0$ denote the set of infinite places of $F$. For all finite places, that is $v \not \in I_0$, let $\oo_v$ denote the unique maximal compact open subring of $F_v$. The restricted product of (additive) groups
\[\A=\A_F:=\sideset{}{'}\prod(F_{v}:\mathcal{O}_{v})\equiv \prod_{v\in I_0} F_{v}\times \sideset{}{'}\prod_{v\notin I_0}(F_{v}:\mathcal{O}_{v}),\]
can be made into a locally compact ring called the {\em ring of adeles} of $F$. %We embed $F$ diagonally into $\A$. Then $F$ sits discretely inside $\A$ and the quotient $F \backslash \A$ is compact.

Let $G$ be a linear\footnote{That is, $G$ embeds into ${\rm GL}_n$ for some $n$.} algebraic group defined over a number field $F$. For convenience, let us put 
$$G_v:=G(F_v)$$
for a place $v$ of $F$.  For all $v \not \in I_0$, we put 
$$K_v:=G(\oo_v):= G(F_v) \cap {\rm GL}_n(\oo_v)$$
Then $K_v$ is a {\em compact open} subgroup of $G(F_v)$. 

The {\em adelic group} $G(\A)$ obtained by taking the adelic points of $G$ is isomorphic as a topological group to the restricted product of $G_v$ with respect to $K_v$:
$$G(\A) \simeq \sideset{}{'}\prod (G_v : K_v).$$
%The group $G(F)$ embeds diagonally into $G(\A)$. The image is {\em discrete} but note that $G(F) \backslash G(\A)$ is not necessarily compact. 

\begin{proposition} 
\label{reductive-GelfandPair} 
If $G$ is reductive, then for all but finitely many of the finite places $v$, $(G_v,K_v)$ is a Gelfand pair. Thus, irreducible unitary representations of $G(\A)$ are factorizable in the sense of Theorem \ref{factorization}. 
\end{proposition}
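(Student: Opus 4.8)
The plan is to establish the Gelfand pair assertion for all but finitely many finite places; the factorizability statement is then immediate from Theorem~\ref{factorization}, whose hypothesis is precisely that $(G_v,K_v)$ be a Gelfand pair for all but finitely many $v$ (the finitely many archimedean places are harmless, being absorbed into the exceptional set). So the whole content of the proposition is the claim that the spherical Hecke algebra $\mathcal{H}(G_v,K_v)$ of compactly supported bi-$K_v$-invariant functions on $G_v=G(F_v)$ is commutative for almost all finite $v$.

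First I would arrange good reduction. Since $G$ is cut out inside some $\mathrm{GL}_n$ by finitely many polynomial equations with coefficients in $F$, clearing denominators extends $G$ to an affine group scheme $\mathcal{G}$ over the ring of $S_0$-integers $\mathcal{O}_{S_0}$ for a suitable finite set of places $S_0$. By the standard spreading-out formalism, after enlarging $S_0$ to a finite set $S$ one may assume that $\mathcal{G}$ is smooth over $\mathcal{O}_v$ with connected reductive special fibre, and that $\mathcal{G}\hookrightarrow\mathrm{GL}_{n,\mathcal{O}_v}$ is a closed immersion, for every $v\notin S$. Equivalently, $G$ is unramified at each such $v$, and $K_v=\mathcal{G}(\mathcal{O}_v)$ is a hyperspecial maximal compact subgroup of $G_v$. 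This geometric input is the step I expect to be the main obstacle to make fully rigorous, as it relies on the openness of the smooth and reductive loci for a finitely presented group scheme.

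For an unramified place $v$ I would then appeal to the structure theory of reductive $p$-adic groups. Fixing a maximal $F_v$-split torus $T\subset G$, with cocharacter lattice $X_*(T)$, Weyl group $W$, and a uniformiser $\varpi_v$ of $\mathcal{O}_v$, the Cartan decomposition reads
$$G_v=\coprod_{\lambda\in X_*(T)^{+}} K_v\,\varpi_v^{\lambda}\,K_v,$$
so the double cosets $K_v\backslash G_v/K_v$ are indexed by the dominant cocharacters $X_*(T)^{+}$. Commutativity of $\mathcal{H}(G_v,K_v)$ is now the classical Satake isomorphism $\mathcal{H}(G_v,K_v)\cong\mathbb{C}[X_*(T)]^{W}$, whose target is a ring of Weyl invariants in the group algebra of a lattice and hence manifestly commutative.

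More elementarily, commutativity also follows from Gelfand's trick. Let $\theta(g)=\sigma(g)^{-1}$, where $\sigma$ is the Chevalley involution; on $\mathrm{GL}_n$ this is simply the transpose $g\mapsto {}^{t}g$. Since $\sigma$ inverts $T$, the anti-automorphism $\theta$ fixes $T$ pointwise, hence fixes each $\varpi_v^{\lambda}$, and as $\theta(K_v)=K_v$ it stabilises every double coset $K_v\varpi_v^{\lambda}K_v$. Therefore $f\mapsto f\circ\theta$ acts as the identity on $\mathcal{H}(G_v,K_v)$ while reversing convolution, which forces $f_1*f_2=f_2*f_1$. Either route shows that $(G_v,K_v)$ is a Gelfand pair for all $v\notin S$, i.e.\ for all but finitely many finite places, completing the proof.
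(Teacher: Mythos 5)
The paper offers no argument of its own here: its entire ``proof'' is the sentence ``See \cite{Tits} for a proof.'' What you have written is, in substance, exactly the mathematics sitting behind that citation, so your route is not a different one but a reconstruction of the intended one. Tits' Corvallis article records the two facts you prove: that after spreading out, the schematic closure $\mathcal{G}$ of $G$ in $\mathrm{GL}_{n,\mathcal{O}_S}$ is a reductive group scheme over $\mathcal{O}_v$ for all $v$ outside a finite set, so that $K_v=G(F_v)\cap\mathrm{GL}_n(\mathcal{O}_v)=\mathcal{G}(\mathcal{O}_v)$ is a hyperspecial maximal compact subgroup; and that the Hecke algebra of a reductive $p$-adic group relative to a special (in particular hyperspecial) maximal compact subgroup is commutative, via the Cartan decomposition and the Satake homomorphism. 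Your reduction of the factorizability claim to Theorem \ref{factorization} is also exactly right, the archimedean and finitely many bad finite places being absorbed into the exceptional set.

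Two points of precision, neither of which is a genuine gap. First, your Satake statement $\mathcal{H}(G_v,K_v)\cong\mathbb{C}[X_*(T)]^{W}$ is the \emph{split} form; for all but finitely many $v$ the group $G_v$ is unramified but need not be split (a unitary group at inert places, say), and then the correct target involves the relevant lattice for the centralizer of a maximal split torus and the relative Weyl group. This is harmless for your purpose: injectivity of the Satake homomorphism into \emph{some} commutative ring is all that commutativity requires. Second, the Gelfand-trick paragraph is literally correct only in the split case, where an automorphism restricting to inversion on a maximal torus exists (for $\mathrm{GL}_n$ it is $g\mapsto{}^{t}g^{-1}$). At a non-split unramified place, a Chevalley involution defined over the local field --- whose existence is itself not a triviality --- acts on the cocharacter lattice of the maximal split torus by $-w_0$ rather than $-1$, so $\theta$ fixes each double coset $K_v\varpi_v^{\lambda}K_v$ only after conjugating by a representative of $w_0$ lying in $K_v$, which exists precisely because $K_v$ is special. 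Since you offer this only as an alternative to the Satake argument, your proof stands; just read ``all but finitely many $v$'' throughout as ``unramified'', not ``split''.
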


See \cite{Tits} for a proof. We mention in passing that when $G$ is reductive, $G(\A)$ is a Type I group. This is not necessarily true if $G$ is not reductive (see \cite{Moore}).

%%%%%%%%%%%%%%%%%%%%%%%%
\subsection{$C^*$-algebras of restricted products of groups}
\label{sec:calggroup}
For a group $G$, we write $\epsilon_G$ for the trivial $G$-representation and if $G$ is compact we tacitly identify the trivial representation with its support projection in $C^*(G)$. We note that for a compact group $K$, the support projection $\epsilon_K$  is represented by the constant function $1_K\in C(K)\subseteq C^*(K)$ (assuming the volume is normalized to one).

\begin{proposition} 
\label{propdecomomd}
Let $(G_v,K_v)_{v \in I}$ be an admissible family of groups as in Definition \ref{def:adm} and $S\subseteq I$ a finite subset  containing $I_0$.
Let $G_S$ be as in (\ref{G_S}). We have that 
$$C^*(G_S)\simeq  \bigotimes_{v \in S} C^*(G_v) \otimes C^*(K_S),$$
and 
$$C^*_r(G_S)\simeq  \bigotimes_{v \in S} C^*_r(G_v) \otimes C^*(K_S),$$
and the $C^*$-algebra of $K_S$ is the restricted tensor product
$$C^*(K_S)\simeq C^*_r(K_S)\simeq \sideset{}{'}{\bigotimes}_{v\notin S} (C^*(K_v),\epsilon_{K_v}).$$
\end{proposition}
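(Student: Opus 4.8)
The plan is to reduce all three isomorphisms to standard facts about $C^*$-algebras of finite direct products of groups, together with the direct-limit description of the restricted tensor product; the only genuinely new content is the identification of $C^*(K_S)$ with an infinite restricted tensor product. Note first that since $S\supseteq E$, each $K_v$ with $v\notin S$ is compact, so $K_S=\prod_{v\notin S}K_v$ is a compact group.

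I would first dispose of the two displayed isomorphisms for $G_S$. Since $G_S=\prod_{v\in S}G_v\times K_S$ is a \emph{finite} direct product of locally compact groups, the standard isomorphism $C^*(G_1\times G_2)\cong C^*(G_1)\otimes_{\max}C^*(G_2)$ (whose universal property matches pairs of commuting unitary representations), applied iteratively, gives $C^*(G_S)\cong (\bigotimes_{v\in S}C^*(G_v))\otimes C^*(K_S)$ with maximal tensor products throughout. Because each $G_v$ is Type I, hence nuclear by \cite{patte}, and $C^*(K_S)$ is nuclear, the maximal and spatial tensor norms coincide and all products may be taken spatial. The reduced statement is even more direct: $L^2(G_S)=\bigotimes_{v\in S}L^2(G_v)\otimes L^2(K_S)$ intertwines the regular representations, yielding $C^*_r(G_S)\cong \bigotimes_{v\in S}C^*_r(G_v)\otimes C^*_r(K_S)$ spatially, and $C^*_r(K_S)=C^*(K_S)$ since $K_S$ is compact, hence amenable. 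That same amenability gives the first isomorphism $C^*(K_S)\cong C^*_r(K_S)$ of the last display.

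It remains to identify $C^*(K_S)$ with $\sideset{}{'}\bigotimes_{v\notin S}(C^*(K_v),\epsilon_{K_v})$. Writing the latter as the direct limit $\varinjlim_T \bigotimes_{v\in T}C^*(K_v)=\overline{\bigcup_T \bigotimes_{v\in T}C^*(K_v)}$ over finite $T\subseteq I\setminus S$ (all connecting maps being isometric embeddings since $\epsilon_{K_v}\neq 0$), I would for each such $T$ put $K_T=\prod_{v\in T}K_v$ and $L_T=\prod_{v\in(I\setminus S)\setminus T}K_v$, so that $K_S=K_T\times L_T$ and, by the finite-product case already established, $C^*(K_S)\cong C^*(K_T)\otimes C^*(L_T)$ with $\bigotimes_{v\in T}C^*(K_v)\cong C^*(K_T)$. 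I then define $\Phi_T\colon \bigotimes_{v\in T}C^*(K_v)\to C^*(K_S)$ by $a\mapsto a\otimes\epsilon_{L_T}$. Each $\Phi_T$ is isometric, as $\epsilon_{L_T}$ is a nonzero projection and the spatial norm is a cross norm. Compatibility with the connecting maps, $\Phi_{T'}(a\otimes\bigotimes_{v\in T'\setminus T}\epsilon_{K_v})=\Phi_T(a)$, follows because the trivial representation of a product factorizes, so $\epsilon_{L_T}=(\bigotimes_{v\in T'\setminus T}\epsilon_{K_v})\otimes\epsilon_{L_{T'}}$. Hence the $\Phi_T$ assemble into a single isometric $*$-homomorphism $\Phi$ from the restricted tensor product into $C^*(K_S)$.

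The hard part will be surjectivity of $\Phi$, that is, density of $\bigcup_T\Phi_T(\bigotimes_{v\in T}C^*(K_v))$ in $C^*(K_S)$. Here I would use that $\epsilon_{L_T}$ is represented by the normalized constant function $1_{L_T}$, so that under $C^*(K_S)\cong C^*(K_T)\otimes C^*(L_T)$ the image of $\Phi_T$ contains every continuous function on $K_S$ inflated from $K_T$, i.e. depending only on the coordinates in $T$. These finitely-supported continuous functions form a point-separating, conjugation-closed, unital subalgebra of $C(K_S)$ under pointwise multiplication, hence are dense in $(C(K_S),\|\cdot\|_\infty)$ by Stone--Weierstrass. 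Since $K_S$ is compact with normalized Haar measure, $\|f\|_{C^*(K_S)}\le\|f\|_{L^1}\le\|f\|_\infty$ on $C(K_S)$, so sup-norm density upgrades to $C^*$-norm density; as $C(K_S)$ is itself dense in $C^*(K_S)$, the range of $\Phi$ is dense and $\Phi$ is the desired isomorphism. Alternatively, the same density follows from Peter--Weyl: every irreducible representation of $K_S$ is an external tensor product of irreducibles of the $K_v$ that is trivial in all but finitely many factors, so each matrix block $\End(V_\pi)\subseteq C^*(K_S)$ already lies in a single $\Phi_T$-image.
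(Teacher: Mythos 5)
Your proof is correct and takes essentially the same route as the paper: the two isomorphisms for $G_S$ via the finite-product decomposition plus nuclearity (type I $\Rightarrow$ nuclear by Paterson, and $K_S$ compact), and the identification of $C^*(K_S)$ with the restricted tensor product by exhibiting a common dense $*$-subalgebra of functions depending on only finitely many coordinates, with $\epsilon_{K_v}$ realized as the normalized constant function. The only difference is one of detail: the paper asserts the density of this subalgebra as clear, while your Stone--Weierstrass argument together with the estimate $\|f\|_{C^*(K_S)}\le\|f\|_{L^1}\le\|f\|_\infty$ (or the Peter--Weyl alternative) supplies exactly the justification the paper leaves implicit.
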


\begin{proof}
The first two isomorphisms follow from the corresponding product decomposition at the level of groups. A priori, the product decomposition for the first isomorphism gives that $C^*(G_S)$ is the maximal tensor product $\bigotimes_{v \in S}^{\rm max} C^*(G_v) \otimes C^*(K_S)$ but each $G_v$ is type $I$ so $C^*(G_v)$ is nuclear by \cite{patte} and $K_S$ is compact so the full and reduced $C^*$-algebras of $K_S$ coincide and are nuclear

We define $A_0\subseteq C(K_S)$ as the subalgebra of continuous functions that are constant except in finitely many places, i.e. functions $f\in C(K_S)$ of the form $f((k_v)_{v\notin S})=f_0((k_v)_{v\in S'\setminus S})$ for a finite super set $S'\supseteq S$. Since the support projection of the trivial representation of a compact group is represented by the constant function (assuming the volume is normalized to one), it is clear that $A_0$ naturally embeds as a dense $*$-subalgebra of both $C^*(K_S)$ and $\bigotimes_{v\notin S} (C^*(K_v),\epsilon_{K_v})$ so the isomorphism exists by the universal property of group $C^*$-algebras.
\end{proof}

Proposition \ref{propdecomomd} can also be found in the literature as \cite[Example (3), page 316]{Blackadar}.

\begin{lemma} 
\label{bmplem} (Baum-Millington-Plymen) 
Let $G$ be a group that is the union of ascending chain of open subgroups $G_n$. Then the inclusions
$$\bigcup_{n} C^*(G_n)\subseteq C^*(G) \quad \mbox{and}\quad \bigcup_{n} C^*_r(G_n)\subseteq C^*_r(G)$$
are dense, or in other words $C^*(G)=\varinjlim C^*(G_n)$ and $C^*_r(G)=\varinjlim C^*_r(G_n)$ in the category of $C^*$-algebras.
\end{lemma}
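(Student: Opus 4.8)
The plan is to reduce the statement to two facts about \emph{open} subgroups: first, that every element of $C_c(G)$ already lives in $C_c(G_n)$ for some $n$; and second, that the inclusion of an open subgroup induces an isometric embedding of the associated (full or reduced) group $C^*$-algebras. Granting these, the result is immediate: the maps $C^*(G_n)\to C^*(G_{n+1})$ coming from the open inclusions $G_n\subseteq G_{n+1}$ are injective $*$-homomorphisms, hence isometric, so $\varinjlim_n C^*(G_n)$ is identified with the closure of the increasing union $\bigcup_n C^*(G_n)$ inside $C^*(G)$; since this union contains the dense subalgebra $C_c(G)$, the closure is all of $C^*(G)$. The same argument applies verbatim with $C^*$ replaced by $C^*_r$.

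First I would record the measure-theoretic input. An open subgroup $H\subseteq G$ has positive Haar measure, and the restriction to $H$ of a left Haar measure on $G$ is a left Haar measure on $H$. Moreover $H$ is also closed, so extending a function by zero outside $H$ carries $C_c(H)$ into $C_c(G)$; this extension respects convolution and the $*$-operation, so it is a $*$-algebra homomorphism $C_c(H)\hookrightarrow C_c(G)$. Identifying $C_c(H)$ with its image, I claim $C_c(G)=\bigcup_n C_c(G_n)$. Indeed, if $f\in C_c(G)$ then its support $K$ is compact and contained in the open cover $\{G_n\}_n$; by compactness, together with the fact that the $G_n$ are nested, one has $K\subseteq G_n$ for some $n$, whence $f\in C_c(G_n)$.

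The crux is the isometric embedding $C^*(H)\hookrightarrow C^*(G)$ (and its reduced analogue) for $H$ open. For the reduced norm I would decompose $L^2(G)$ over the right cosets of $H$: choosing representatives for $H\backslash G$ gives $L^2(G)=\bigoplus_{Hg} L^2(Hg)$, and for $f\in C_c(H)$ the operator $\lambda_G(f)$ of left convolution preserves each summand and acts there as a copy of $\lambda_H(f)$ under the identification $L^2(Hg)\cong L^2(H)$. Hence $\lambda_G(f)$ is a direct sum of copies of $\lambda_H(f)$, so $\norm{\lambda_G(f)}=\norm{\lambda_H(f)}$ and the reduced embedding is isometric. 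For the full norm, one inequality is easy: every unitary representation $\pi$ of $G$ restricts to one of $H$, and for $f\in C_c(H)$ we have $\pi(f)=(\pi|_H)(f)$, giving $\norm{f}_{C^*(G)}\le\norm{f}_{C^*(H)}$. The reverse inequality is where I expect the only real work to lie, and I would obtain it from induced representations: given a representation $\sigma$ of $H$, the induced representation $\mathrm{Ind}_H^G\sigma$ (with $G/H$ discrete, so with no measure-theoretic subtleties) contains a copy of $\mathcal{H}_\sigma$, realized as the functions supported on the trivial coset, which is invariant under $\mathrm{Ind}_H^G\sigma(f)$ for $f\in C_c(H)$ and on which this operator acts exactly as $\sigma(f)$. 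Thus $\norm{\sigma(f)}\le\norm{\mathrm{Ind}_H^G\sigma(f)}\le\norm{f}_{C^*(G)}$, and taking the supremum over $\sigma$ yields $\norm{f}_{C^*(H)}\le\norm{f}_{C^*(G)}$, so the two norms agree.

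Assembling these pieces finishes the proof as sketched in the first paragraph. The main obstacle is purely the isometric-embedding step for open subgroups; once it is in place for the pairs $G_n\subseteq G_{n+1}$ (and for $G_n\subseteq G$), the directed-system and density statements are formal. I would emphasize that openness of the $G_n$ is used twice and essentially: it is what makes extension-by-zero continuous, so that $C_c(H)\subseteq C_c(G)$, and what makes $G/H$ discrete, so that the induced-representation argument is elementary.
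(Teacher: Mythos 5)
Your proof is correct, and it is worth noting that the paper itself offers essentially no argument for this lemma: it cites Baum--Millington--Plymen for the reduced case and then asserts that ``the proof works for the maximal group $C^*$-algebra as well.'' Your write-up supplies precisely the content behind both halves of that sentence. For the reduced case, your decomposition of $L^2(G)$ over the cosets of an open subgroup $H$, under which $\lambda_G(f)$ becomes a direct sum of copies of $\lambda_H(f)$ for $f\in C_c(H)$, is the standard mechanism behind the cited result. For the full case --- the part the paper waves through --- your induced-representation argument is exactly the missing justification: restriction of representations gives $\norm{f}_{C^*(G)}\le\norm{f}_{C^*(H)}$, while realizing $\sigma(f)$ as the action of $\mathrm{Ind}_H^G\sigma(f)$ on the invariant subspace of functions supported on the trivial coset (unproblematic precisely because $G/H$ is discrete) gives the reverse inequality, so $C^*(H)\hookrightarrow C^*(G)$ isometrically. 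Combined with your observation that $C_c(G)=\bigcup_n C_c(G_n)$ (compact supports plus the nested open cover, using that open subgroups are also closed so extension by zero is a $*$-homomorphism compatible with restricted Haar measures), the identification of $\varinjlim C^*(G_n)$ with the closure of $\bigcup_n C^*(G_n)$ inside $C^*(G)$ is formal, as you say. One cosmetic remark: in your opening paragraph you call the connecting maps ``injective $*$-homomorphisms, hence isometric,'' which inverts the logical order --- injectivity is not available before the norm computation --- but this is harmless, since your later argument establishes the isometry directly on $C_c(G_n)$, which is what the assembly step actually uses.
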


In fact, this lemma is proven for the reduced group $C^*$-algebra in \cite{Baum_etal} but the proof works for the maximal group $C^*$-algebra as well.

\begin{lemma} 
\label{factorization-grouprep}
Let $(G_v,K_v)_{v \in I}$ be an admissible collection of groups as in Definition \ref{def:adm}. We have 
$$C^*\left(\sideset{}{'}{\prod} (G_v : K_v)\right) \simeq  \sideset{}{'}\bigotimes_v (C^*(G_v) , p_{K_v}),$$
and 
$$C^*_r\left(\sideset{}{'}{\prod} (G_v : K_v)\right) \simeq  \sideset{}{'}\bigotimes_v (C^*_r(G_v) , p_{K_v}).$$
Here, $p_{K_v}$ is defined for all $v \not \in I_0$, as the projection in $C^*(G_v)$, or $C^*_r(G_v)$, given by the characteristic function of the compact open subgroup $K_v$ whose volume is normalized to be 1. 
\end{lemma}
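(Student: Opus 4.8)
The plan is to prove Lemma~\ref{factorization-grouprep} by combining the finite-level decomposition of Proposition~\ref{propdecomomd} with the continuity statement of Lemma~\ref{bmplem}, so that the isomorphism is obtained as a direct limit of the finite-level isomorphisms. First I would fix the exceptional finite set $E$ and note that both sides of the claimed isomorphism are, by construction, direct limits indexed over finite subsets $S\subseteq I$ containing $E$. On the left, by Lemma~\ref{bmplem} applied to the ascending chain of open subgroups $G_S$ whose union is $\sideset{}{'}\prod(G_v:K_v)$ (as recorded in \eqref{agdef}), we have $C^*(\sideset{}{'}\prod(G_v:K_v))=\varinjlim_S C^*(G_S)$, and similarly for the reduced algebra. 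On the right, the restricted tensor product $\sideset{}{'}\bigotimes_v(C^*(G_v),p_{K_v})$ is by definition of Section~\ref{basic-family} the direct limit $\varinjlim_S A_S$ where $A_S=\bigotimes_{v\in S}C^*(G_v)$, with connecting maps given by tensoring with the distinguished projections $p_{K_v}$ as in \eqref{aftoafprime}.

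The key step is then to identify the two directed systems term by term in a way compatible with the connecting maps. For each finite $S\supseteq E$, Proposition~\ref{propdecomomd} gives
\begin{equation*}
C^*(G_S)\simeq \bigotimes_{v\in S}C^*(G_v)\otimes C^*(K_S),\qquad C^*(K_S)\simeq \sideset{}{'}\bigotimes_{v\notin S}(C^*(K_v),\epsilon_{K_v}),
\end{equation*}
and under the identification $p_{K_v}=\epsilon_{K_v}$ (here I would record that the characteristic function of $K_v$, normalized to unit volume, is precisely the support projection of the trivial representation $\epsilon_{K_v}$, which is the constant function $1_{K_v}$, so the projection $p_{K_v}$ appearing in the statement agrees with the distinguished projection of Proposition~\ref{propdecomomd}) the tensor factor $C^*(K_S)$ is itself the tail of the restricted tensor product. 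I would then check that passing from $S$ to $S'=S\sqcup\{k\}$ on the left corresponds, under these isomorphisms, exactly to the connecting map $a\mapsto a\otimes p_{K_k}$ on the right: enlarging $S$ moves the factor $C^*(K_k)\simeq\sideset{}{'}\bigotimes$-tail into an explicit tensor factor $C^*(G_k)$ via the inclusion $K_k\hookrightarrow G_k$, and on the support projection this inclusion sends $\epsilon_{K_k}=1_{K_k}$ to $p_{K_k}$. This compatibility of the squares is what upgrades the level-wise isomorphisms to an isomorphism of direct limits.

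Having matched the systems, the isomorphism of $C^*$-algebras follows by the universal property of the direct limit (functoriality of $\varinjlim$), giving the maximal-algebra statement; the reduced statement is identical, using the reduced half of Proposition~\ref{propdecomomd} and of Lemma~\ref{bmplem}, and noting that $C^*(K_S)=C^*_r(K_S)$ since each $K_S$ is compact. The main obstacle I anticipate is the bookkeeping in the commuting-squares step: one must verify that the inclusion-induced map $C^*(G_S)\to C^*(G_{S'})$ really does intertwine the two descriptions, i.e.\ that the \emph{only} discrepancy between the finite-tensor-factor picture and the restricted-tensor-product picture is the relocation of a single $K_k$-factor and that this relocation is implemented on the nose by tensoring with $p_{K_k}$. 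This hinges on the fact that $\epsilon_{K_k}$ is a genuine projection whose image under $C^*(K_k)\to C^*(G_k)$ is the normalized characteristic function $p_{K_k}$, so that the distinguished-projection data on the right is exactly the data produced by the group-level inclusions on the left; once this identification is pinned down, no analytic subtlety remains and the rest is formal.
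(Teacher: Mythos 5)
Your proposal is correct and follows essentially the same route as the paper's own proof: Lemma~\ref{bmplem} realizes $C^*\bigl(\sideset{}{'}\prod(G_v:K_v)\bigr)$ as the closure of $\bigcup_S C^*(G_S)$, Proposition~\ref{propdecomomd} decomposes each $C^*(G_S)$, and the identification of $\epsilon_{K_v}$ with $p_{K_v}$ under the inclusion $C^*(K_v)\subseteq C^*(G_v)$ matches the two sides, for both the full and reduced algebras. One caveat on phrasing: the two directed systems are not isomorphic ``term by term'' --- $C^*(G_S)\simeq \bigotimes_{v\in S}C^*(G_v)\otimes C^*(K_S)$ carries a tail factor that $A_S=\bigotimes_{v\in S}C^*(G_v)$ lacks --- so the conclusion comes not from naive functoriality of $\varinjlim$ but from the mutual interleaving of the images of the two systems inside the common completion (your ``relocation'' of the $K_k$-factors, which is exactly the paper's identification \eqref{jnadnadjn}); since you spell out that mechanism and it does work, this is imprecision rather than a gap.
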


For compact open subgroups $K_v$, we have that $C^*(K_v)$ is a subalgebra of $C^*(G_v)$. Recall that when viewed as an element of the subalgebra $C^*(K_v)$, the projection $p_{K_v}$ is simply the constant function 1, that is, $\epsilon_{K_v}$ that we used earlier.

\begin{proof}
We only provide a proof for the maximal $C^*$-algebras, the statement in the reduced setting goes ad verbatim. Recall that $\sideset{}{'}{\prod} (G_v : K_v)=\bigcup_{S} G_S$ (cf. \eqref{G_S}) with $S$ running over finite subsets of $I$ containing $I_0$ and that each $G_S$ is open. From Lemma \ref{bmplem}, we conclude that we have a dense inclusion
$$\bigcup_{S} C^*(G_S)\subseteq C^*\left(\sideset{}{'}{\prod} (G_v : K_v)\right).$$
The inclusion is isometric on each $C^*(G_S)$. From Proposition \ref{propdecomomd}, we conclude that 
$$ \overline{\bigcup_S \left ( \bigotimes_{v \in S} C^*(G_v) \otimes  \bigotimes_{v\notin S} (C^*(K_v),p_{K_v}) \right )}\subseteq C^*\left(\sideset{}{'}{\prod} (G_v : K_v)\right),$$
is dense. At all $v \not \in I_0$, the subgroup $K_v$ is compact open in $G_v$ and therefore $p_{K_v}\in C^*(K_v)\subseteq C^*(G_v)$. It follows that
\begin{equation}
\label{jnadnadjn}
\bigcup_S \left (  \bigotimes_{v \in S} C^*(G_v) \otimes  \bigotimes_{v\notin S} (C^*(K_v),p_{K_v}) \right )\subseteq  \sideset{}{'}\bigotimes_v (C^*(G_v) , p_{K_v}),
\end{equation}
is dense. Therefore $\bigcup_S \left (  \bigotimes_{v \in S} C^*(G_v) \otimes  \bigotimes_{v\notin S} (C^*(K_v),p_{K_v}) \right )$ canonically embeds isometrically as a dense subalgebra of both $C^*\left(\sideset{}{'}{\prod} (G_v : K_v)\right)$ and $ \sideset{}{'}\bigotimes_v (C^*(G_v) , p_{K_v})$, proving the lemma.
\end{proof}

\begin{corollary} 
\label{equitotad}
Let $G$ be a linear algebraic group over a number field $F$. Then 
$$C^*\left(G(\A) \right) \simeq  \sideset{}{'}\bigotimes_v (C^*(G_v) , p_{K_v}), \quad \mbox{and}\quad C^*_r\left(G(\A) \right) \simeq  \sideset{}{'}\bigotimes_v (C^*_r(G_v) , p_{K_v}).$$
Here, the $p_{K_v}$ are defined for all $v \not \in I_0$ as in Lemma \ref{factorization-grouprep}.
\end{corollary}

\begin{remark} 
\label{adjnakdjnadkjn}
Corollary \ref{equitotad} implies the result of Tadi\'c \cite{Tadic} which describes the the unitary dual of $G(\A)$ as a restricted product in terms of those of $G_v$'s. 
\end{remark}

%%%%%%%%%%%%%
\section{Restricted tensor products of Hilbert $C^*$-modules}  
\label{main-results}

We now turn our attention to defining restricted tensor products of Hilbert $C^*$-modules. For Hilbert $C^*$-modules, restricted tensor products will be defined in a similar spirit as for Hilbert spaces (see Subsection \ref{hilbert-space}) but more care is needed for the fixed vectors. 

Recall that a Hilbert $C^*$-module $X$ over a $C^*$-algebra $A$ is a right $A$-module $X$ equipped with an $A$-valued inner product $\langle {\cdot},{\cdot}\rangle\subrangle{A}$ making $X$ into a Banach space in the norm 
\[\|x\|_X:=\|\langle x,x\rangle\subrangle{A}\|^{\tfrac{1}{2}}.\] 
The $C^*$-algebra of adjointable $A$-linear operators on $X$ is denoted by $\End_A^*(X)$. An element $T\in \End_A^*(X)$ is said to be $A$-compact if it is a norm limit of sums of rank one module operators
\begin{equation} \label{rank-one} T_{\xi,\eta}: x\mapsto \xi\langle \eta,x\rangle_X,
\end{equation}
for $\xi,\eta\in X$. We write $\mathbb{K}_A(X)\subseteq \End_A^*(X)$ for the ideal of $A$-compact operators. For more details on Hilbert $C^*$-modules, see \cite{lancebook}.

\subsection{Compatible collections}
Let $(A_v,p_v)_{v \in I}$ be a collection of $C^*$-algebras equipped with distinguished projections as in Subsection \ref{basic-family}. We also consider a collection $(X_v)_{v \in I}$ of Hilbert $C^*$-modules $X_v$ over $A_v$. We write $\langle {\cdot},{\cdot}\rangle\subrangle{v}$ for the $A_v$-valued inner product on $X_v$.

\begin{definition} 
\label{def:compvector}
Assume that we are given a distinguished vector $x_v\in X_v$ for all $v \not \in I_0$.  We say that  $(X_{v},x_{v})_{v\in I}$ is a \emph{compatible collection} of right Hilbert $C^{*}$-modules over $(A_{v},p_{v})_{v\in I}$, if for all but finitely many $v$, we have  
$$\langle x_v, x_v \rangle\subrangle{v}=p_v.$$ 
\end{definition}

\begin{remark}
\label{rmk:automatic}
Note that compatibility implies that for all but finitely many $v$, we have $x_{v}=x_{v}p_{v}$, as $\langle x_{v}-x_{v}p_{v},x_{v}-x_{v}p_{v}\rangle_{v}=0$. Also, the rank one operators (see (\ref{rank-one}))
\begin{equation} \label{pxv} p_{x_{v}}:=T_{x_{v},x_{v}}\in\mathbb{K}(X_{v})
\end{equation} 
are projections.
\end{remark}

We put
$$A:=\sideset{}{'}{\bigotimes}_{v \in I} (A_v,p_v) \quad\mbox{and}\quad A_{0}:=\bigcup_S A_S$$
where $S$ ranges over finite subsets of $I$ containing $I_0$. 
Note that $A_{0}\subseteq A$ is a dense $*$-subalgebra.

We shall now show that a compatible collection $(X_v,x_{v})_{v \in I}$ of Hilbert $C^*$-modules over $(A_v,p_{v})_{v \in I}$ admits a restricted tensor product, which is a Hilbert $C^*$-module over the restricted tensor product $C^{*}$-algebra $A$ of $(A_v,p_{v})_{v \in I}$. 

Following Subsection \ref{vector-space}, for any $S$ as above, we define the right $A_S$-Hilbert $C^*$-module $X_S$ as the exterior tensor product 
\begin{equation} 
\label{X_S-define} 
X_S := \bigotimes_{v \in S} (X_v,x_v).
\end{equation}
Indeed, $X_S$ is a Hilbert $C^*$-module over $A_{S}$ because it is a finite exterior tensor product. Then for $S'=S \sqcup \{ v' \}$, we will consider the map
\begin{equation} \label{embedding} \iota^{S'}_{S}:X_S \hookrightarrow X_{S'}=X_S \otimes X_{v'}, \quad x \mapsto x \otimes x_{v'},
\end{equation}
which is an isometric embedding.
Using these embeddings, we turn the collection $X_S$, where $S$ runs over finite sets containing $I_0$, into an ascending chain and form
\begin{equation}
\label{Xnaught}
X_0 :=\bigcup_S X_S.
\end{equation}
Note that $X_0$ is spanned by elements $w=\otimes_v w_v$ such that for all but finitely many $v$, we have $w_v$ is the distinguished vector $x_v$. Given such an element $w=\otimes_v w_v \in X_0$ with all but finitely many $w_v$ equals the distinguished vectors $x_v$, and $a=\otimes_v a_v\in A_{0}$ we define $w{\cdot}a$ via the rule
$$w{\cdot}a:= \otimes_{v}(w_v {\cdot} a_v)$$
that is, we act component-wise. It holds that $w_v {\cdot} a_v=w_v=x_v$, for all but finitely many $v$, because $(x_v)$ is compatible with $(p_v)$, so $w{\cdot}a$ lies in $X_0$. We can extend this operation by linearity to a right action of $A_{0}$ on $X_0$.

We will now describe the pre-Hilbert $A_{0}$-module structure on $X_0$. For elements $w=\otimes_v w_v, z=\otimes_v z_v\in X_0$, we put
\begin{equation} \label{inner-prod-def} \langle w,z \rangle := \otimes_{v}\langle w_v, z_v \rangle_v.
\end{equation}
Then for all but finitely many places we have 
$$\langle w_v, z_v \rangle_v=\langle x_v, x_v \rangle_v=p_v.$$ 
Hence the inner product $\langle w,z \rangle$ lands in $A_0$. We can extend this inner product by sesquilinearity over $A_{0}$ to all of $X_0$. 

Finally, we need to show that for all $z\in X_{0}$ we have $\langle z,z\rangle \geq 0$ in $A$ and that $\langle z,z\rangle=0$ only if $z =0$. To this end, notice the natural inclusions 
\begin{equation}
\label{eq:inclusions}
\iota^X_S:X_S\hookrightarrow X_0 , \qquad \iota^A_S:A_S\hookrightarrow A_{0},\end{equation} 
where $\iota^X_S$ is $A_S$-linear and inner-product preserving in the sense that for $w,z\in X_S$, we have
\begin{equation}
\label{eq:isometric}
\langle \iota^X_S(w),\iota^X_S(z)\rangle_{X_0}=\iota^A_S\left(\langle w,z\rangle_{X_S}\right).
\end{equation}

Each $X_S$ is a finite exterior tensor product and thus a Hilbert $C^*$-module, and for $z\in X_{S}$, it follows from (\ref{eq:isometric}) that
$$\langle \iota^{X}_{S}(z),\iota^{X}_{S}(z)\rangle=\iota^A_S(\langle z,z\rangle_{X_S})\geq 0,\quad 
\langle \iota^{X}_{S}(z),\iota^{X}_{S}(z)\rangle=0 \Rightarrow \iota^{A}_{S}(\langle z,z\rangle_{X_S}) =0,
$$
implying that Equation \eqref{inner-prod-def} defines an positive and non-degenerate inner product on $X_0$.

We have shown that $X_0$ is a right pre-Hilbert $C^*$-module over $A_{0}$. It is now a standard procedure to take completions and obtain a $C^*$-module over $A$. The above discussion can be summarized into the following definition. 

\begin{definition}
\label{main-result-1} Let $(X_{v},x_{v})_{v\in I}$be a compatible collection of right Hilbert $C^{*}$-modules over the collection $(A_{v},p_{v})_{v\in I}$ of $C^*$-algebras as in Definition \ref{def:compvector}. By completing the pre-Hilbert $C^*$-module $X_0$ (see \ref{Xnaught}) with respect to the norm arising from the inner product (\ref{inner-prod-def}), we obtain a right Hilbert $C^*$-module, denoted
$$\sideset{}{'}{\bigotimes}_{v \in I} (X_v,x_v),$$
over the restricted product $C^*$-algebra
$$\sideset{}{'}{\bigotimes}_{v \in I} (A_v,p_v).$$ 
We call this the {\it restricted tensor product} of the collection $(X_{v},x_{v})_{v\in I}$.
\end{definition}

%%%%%%%%%%%%%%%%%%%
\subsection{Direct limit construction}

A robust construction of $\sideset{}{'}{\bigotimes}_{v \in I} (X_v,x_v)$ can be achieved by promoting $(X_S)_S$ to a directed system of Hilbert $C^{*}$-modules over $A$ and taking the direct limit in this category. For details on direct limits of Hilbert $C^{*}$-modules, see \cite[Section 2]{BergmannConti}.

Observe that the space $X_S$ is a finite exterior tensor product that forms a right Hilbert $A_S$-module in the obvious way. 
We have the following.

\begin{proposition}
\label{propconnmap}
For a finite set $S$ containing $I_0$ and $v\notin S$, the map 
$$X_S\otimes_{A_S} A_{S \sqcup \{ v\}} \to X_{S \sqcup \{ v\}}, \quad x\otimes a_v\mapsto x\otimes x_va_v,$$
defined using that $X_S\otimes_{A_S} A_{S \sqcup \{ v\}}=X_S\otimes A_v$ and $X_{S \sqcup \{ v\}}=X_S\otimes X_v$, is $A_{S \sqcup \{ v\}}$-linear, adjointable, and induces an isometric map of $A_{S \sqcup \{ v\}}$-modules
$$X_S\otimes p_vA_v\to X_{S \sqcup \{ v\}}.$$
In particular, the system 
$$\{ X_S\otimes_{A_S} A \}_{S},$$
where $S$ are as above, forms a directed system of right $A$-Hilbert $C^*$-modules with all connecting maps being partial isometries.
\end{proposition}

We now show that the direct limit of $\{ X_S\otimes_{A_S} A \}_{S}$ captures the restricted tensor product $\sideset{}{'}{\bigotimes}_{v \in I} (X_v,x_v)$ of Definition \ref{main-result-1}.

\begin{theorem} 
\label{indnlimit}
We have 
$$\sideset{}{'}{\bigotimes}_{v \in I} (X_v,x_v) \simeq  \varinjlim_S X_S \otimes_{A_S} A$$
 as right $A$-Hilbert $C^*$-modules.
\end{theorem}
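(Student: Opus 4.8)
The plan is to produce an explicit unitary isomorphism of right $A$-Hilbert $C^*$-modules from the direct limit onto $\overline{X_0}=\sideset{}{'}{\bigotimes}_{v}(X_v,x_v)$ of Definition \ref{main-result-1}. First I would pin down the direct limit itself. Since the finite sets $F\supseteq E$ are cofinal among all finite subsets of $I$, the limit $\varinjlim_F X_F\otimes_{A_F}A$ may be computed over this cofinal subsystem, and by Proposition \ref{propconnmap} all connecting maps $\psi_{F}^{F'}\colon X_F\otimes_{A_F}A\to X_{F'}\otimes_{A_{F'}}A$ are then isometric and adjointable. For a directed system of right $A$-Hilbert modules with isometric (hence inner-product-preserving) connecting maps, the algebraic inductive limit carries a well-defined $A$-valued inner product, and the Hilbert-module direct limit is simply its completion, equipped with isometric structure maps $\lambda_F\colon X_F\otimes_{A_F}A\to\varinjlim_F X_F\otimes_{A_F}A$.

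Next I would construct the candidate map levelwise as $\Phi_F\colon X_F\otimes_{A_F}A\to\overline{X_0}$, $\xi\otimes a\mapsto \iota^X_F(\xi)\cdot a$, where $\iota^X_F$ is the inclusion from \eqref{eq:inclusions}. Three checks are needed. It is balanced over $A_F$ and $A$-linear because $\iota^X_F(\xi\cdot b)=\iota^X_F(\xi)\cdot\iota^A_F(b)$, which uses the compatibility identity $x_vp_v=x_v$ for $v\notin E$ from Remark \ref{rmk:automatic}. Most importantly it preserves the inner product: for the interior tensor product one has, for $\xi,\eta\in X_F$ and $a,b\in A$,
\[
\langle \xi\otimes a,\eta\otimes b\rangle \;=\; a^{*}\,\iota^A_F\!\big(\langle\xi,\eta\rangle_{X_F}\big)\,b \;=\; a^{*}\,\langle \iota^X_F(\xi),\iota^X_F(\eta)\rangle\,b \;=\; \langle \Phi_F(\xi\otimes a),\Phi_F(\eta\otimes b)\rangle,
\]
where the middle equality is exactly the inner-product-preservation property \eqref{eq:isometric} of the inclusions $\iota^X_F$. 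A short computation using $\iota^X_{F'}\circ\iota^{F'}_{F}=\iota^X_F$ (again invoking $x_{v'}p_{v'}=x_{v'}$ for the new index $v'$) shows $\Phi_{F'}\circ\psi_F^{F'}=\Phi_F$, so by the universal property of the direct limit the $\Phi_F$ assemble into a single $A$-linear, inner-product-preserving map $\Phi\colon \varinjlim_F X_F\otimes_{A_F}A\to\overline{X_0}$.

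It remains to see that $\Phi$ is onto, and here I would argue by density: the range of $\Phi$ contains every $\iota^X_F(\xi)\cdot a$, and taking an approximate unit $(u_\lambda)$ of $A$ gives $\iota^X_F(\xi)\cdot u_\lambda\to\iota^X_F(\xi)$ in $\overline{X_0}$, so the closed range contains all of $X_0=\bigcup_S X_S$, which is dense in $\overline{X_0}$ by construction. An inner-product-preserving map between Hilbert $C^*$-modules is automatically isometric, hence injective with closed range, and a surjective such map is unitary with adjoint its inverse; thus $\Phi$ is the asserted isomorphism. The main obstacle, and the step demanding the most care, is the setup of the first paragraph together with the inner-product computation above: one must verify that the cofinality plus isometry of the connecting maps genuinely identifies the Hilbert-module direct limit with a completion of a union, and then match the interior-tensor-product inner product $a^{*}\iota^A_F(\langle\xi,\eta\rangle)b$ with the inner product on $X_0$ via \eqref{eq:isometric}. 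Once these are in place, well-definedness, compatibility with the connecting maps, and surjectivity via approximate units are routine.
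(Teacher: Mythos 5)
Your proposal is correct and takes essentially the same route as the paper's proof: both define the map levelwise on simple tensors by $\xi\otimes_{A_S} a\mapsto \xi\cdot a$, check that it is inner-product preserving and compatible with the connecting maps, invoke the universal property of the direct limit, and conclude by density of the range in $\sideset{}{'}{\bigotimes}_{v \in I} (X_v,x_v)$. The only difference is cosmetic: the paper forms the algebraic limit of $X_S\otimes_{A_S}A_0$ over the dense subalgebra $A_0$ and simply asserts that the range $X_0\cdot A_0$ is dense, whereas you work with $A$ itself and justify that density step explicitly with an approximate unit.
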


\begin{proof}
Recall that $S$ ranges over  finite subsets of $I$ containing $I_0$. The proposition follows if we can produce a bounded $A_{0}$-linear inner product preserving map 
$$\Phi:\varinjlim_S^{\rm alg} X_S\otimes_{A_S}A_{0} \to X ,$$
with dense range. The superscript indicates we take the direct limit in the algebraic category of modules for the ring $A_{0}$. First of all observe that we may take the direct limit over all finite sets $S$ containing $I_0$. On simple tensors we define 
$$\Phi_{S}: X_{S}\otimes_{A_{S}}^{\rm alg}A_{0}\to X,\quad \Phi_S\left(\left(\otimes_{v\in S} w_v\right)\otimes_{A_S} a\right):=wa.$$
We note that $\Phi_S$ is well defined and it is inner product preserving after ``fixing the tail'' in the following sense. We can write $A_0=A_S\otimes \tilde{A}_{0,S}$ where $\tilde{A}_{0,S}:=\bigcup_{S': S'\cap S=\emptyset} A_{S'}$. For $S'$ with $S'\cap S=\emptyset$, in particular $S'\cap I_0=\emptyset$, we form $p_{S'}=\otimes_{v\in S'}p_v$. We can define the multiplier $\tilde{p}_S$ of $\tilde{A}_{0,S}$ as $\tilde{p}_S(a)=p_{S'}a$ for $a\in A_{S'}$. Then by Proposition \ref{propconnmap}, each $\Phi_{S}$ is a partial isometry inducing an  inner product preserving map 
$$X_{S}\otimes^{\rm alg}\tilde{p}_S\tilde{A}_{0,S}\to X.$$
We can write $X_S\otimes_{A_S}A_{0}=X_{S}\otimes^{\rm alg}\tilde{A}_{0,S}$, and since the tails in $\tilde{A}_{0,S}$ eventually become $p_v$, we conclude that $\varinjlim_S^{\rm alg} X_S\otimes_{A_S}A_{0}=\varinjlim_S^{\rm alg} X_{S}\otimes^{\rm alg}\tilde{p}_S\tilde{A}_{0,S}$.

Since $\varinjlim_S^{\rm alg} X_S\otimes_{A_S}A_{0}=\varinjlim_S^{\rm alg} X_{S}\otimes^{\rm alg}\tilde{p}_S\tilde{A}_{0,S}$, and the map $\Phi_{S}$ is compatible with the directed system $\{X_{S}\otimes_{A_{S}}A_{0}\}$,  the universal property of direct limits gives an inner product preserving map
\[\Phi:\varinjlim_{S}X_{S}\otimes_{A_{S}}^{\rm alg}A_{0}\to X.\]
The range of $\Phi$ is $X_0{\cdot} A_0$ which is dense in  $\sideset{}{'}{\bigotimes}_{v \in I} (X_v,x_v)$, and the result follows. 
\end{proof}

%%%%%%%%%%%%%%%%%%
\subsection{Induction commutes with restricted tensor product}
The following relates the construction above to how local induction functors glue together to give a global one.

\begin{proposition}  
\label{local-global-compatibility-1}
Let $(X_v,x_{v})_{v\in I}$ be a compatible collection of Hilbert $C^{*}$-modules over $(A_{v},p_{v})_{v\in I}$, and $X:=\sideset{}{'}{\bigotimes}_{v \in I} (X_v,x_v)$ their restricted tensor product. If $$(\pi, \mathcal{H}) \simeq \left ( \sideset{}{'}\bigotimes \pi_v, \sideset{}{'}\bigotimes_{v}( \mathcal{H}_v,h_v) \right )$$ is  a representation of 
$A$, then we have a canonical, unitary isomorphism
$$X \otimes_{A} \mathcal{H} \simeq \sideset{}{'}\bigotimes \left( X_v \otimes_{A_v} \mathcal{H}_v ,x_v\otimes_{A_v}h_v\right ),$$
defined on simple tensors by 
$$
(\otimes_{v} w_v)\otimes (\otimes_{v} y_v)\mapsto \otimes_{v} (w_v\otimes y_v).
$$
\end{proposition}

We return to a more general result than Proposition \ref{local-global-compatibility-1} below in Proposition \ref{IP-commutes-RTP}, and the former will follow from the latter.

%%%%%%%%%%%%%%%%%%
\subsection{Compact operators on restricted tensor products}

We now turn to describing the compact operators on a restricted tensor product of Hilbert $C^*$-modules.

\begin{proposition}
\label{comaokdapdops}
Let $(X_v,x_{v})_{v\in I}$ be a compatible collection of Hilbert $C^{*}$-modules over $(A_{v},p_{v})_{v\in I}$, and $X:=\sideset{}{'}{\bigotimes}_{v \in I} (X_v,x_v)$ their restricted tensor product. The inner-product preserving inclusions $\iota^{X}_{S}:X_S\to X$ from Equation \eqref{eq:inclusions} induce an isomorphism of $C^*$-algebras 
$$\mathbb{K}_A(X)\simeq \sideset{}{'}{\bigotimes}_{v \in I} (\mathbb{K}_{A_v}(X_v),p_{x_v}).$$
Here $p_{x_v}$ are the projections defined in (\ref{pxv}).
\end{proposition}
\begin{proof}
Let $S$ be a finite set containing $I_0$ and $S'=S\sqcup\{v\}$. The inner-product preserving inclusions $\iota^{X}_{S}:X_S\to X$ and $\iota_{S}^{S'}:X_{S}\to X_{S'}$ from Equations \eqref{embedding} and \eqref{eq:inclusions} induce isometric $*$-homomorphisms
\[\alpha^{S'}_{S}:\mathbb{K}(X_{S})\to \mathbb{K}(X_{S'}),\quad \alpha^{X}_{S}:\mathbb{K}(X_{S})\to\mathbb{K}(X),\]
defined on rank one operators by $\alpha^{S'}_{S}(T_{\xi,\eta}):=T_{\iota^{S'}_{S}(\xi),\iota^{S'}_{S}(\eta)}$, and similarly for $\alpha^{X}_{S}$. 
We have commutative diagrams
\[\xymatrix{X_{S}\ar[d]_{\iota^{S'}_{S}}\ar[r]^{\iota^{X}_{S}} & X & & \mathbb{K}_{A_{S}}(X_{S})\ar[d]_{\alpha^{S'}_{S}}\ar[r]^{\alpha^{X}_{S}} & \mathbb{K}_{A}(X) \\
X_{S'}\ar[ru]_{\iota^{X}_{S'}} & & &  \mathbb{K}_{A_{S'}}(X_{S'})\ar[ru]_{\alpha^{X}_{S'}}. &}
\]
A short computation shows that for $T\in\mathbb{K}(X_{S})$ we have $\alpha_{S}^{S'}(T)=T\otimes p_{x_{v}}$, so the direct limit of the $\mathbb{K}(X_{S})$ along the maps $\alpha_{S}^{S'}$ coincides with $\sideset{}{'}{\bigotimes}_{v \in I} (\mathbb{K}_{A_v}(X_v),p_{x_v})$, and we obtain an isometric $*$-homomorphism
\begin{equation} \label{isometric-embedding} \sideset{}{'}{\bigotimes}_{v \in I} (\mathbb{K}_{A_v}(X_v),p_{x_v})\to \mathbb{K}(X).
\end{equation}

Since $X_0$ is dense in $X$, the span of the image under $\alpha^{X}_{S}$  of the operators $T_{\xi,\eta} \in \mathbb{K}_{A_{S}}(X_{S})$ is dense in $\mathbb{K}_A(X)$. It follows that the image of the embedding (\ref{isometric-embedding}) is dense, finishing the proof.
\end{proof}

%%%%%%%%%%%
\section{Restricted tensor products of $C^*$-correspondences} 
\label{main-resultscorr}

The application of Hilbert $C^*$-modules we are interested in is hinted at in Proposition \ref{local-global-compatibility-1}, but what is missing is how the Hilbert $C^*$-module transfers a representation of $A$ to another $C^*$-algebra $A'$. The appropriate tool for such a construction is a $C^*$-correspondence. Recall that an $(A',A)$-correspondence is an $A$-Hilbert $C^*$-module $X$ equipped with a left action of $A'$ defined in terms of a $*$-homomorphism
$$\alpha:A'\to \End_A^*(X).$$
We will in this section define restricted tensor products of $C^*$-correspondences. In \cite{BergmannConti} the infinite internal tensor product of a family of $(A,A)$-correspondences was defined as an $(A,A)$-correspondence. The construction here will be based on the external tensor product, and will produce a $C^{*}$-correspondence over the restricted tensor products of the coefficients. 

We remain in the set-up of Section \ref{main-results}. That is $(A_v,p_v)_{v \in I}$ will be a collection of $C^*$-algebras equipped with distinguished projections, $(X_v,x_{v})_{v \in I}$ is a compatible collection of Hilbert $C^{*}$-modules over $(A_{v},p_{v})_{v\in I}$ as in Definition \ref{def:compvector}. We now also introduce a second collection of $C^*$-algebras equipped with distinguished projections $ (A'_v,p'_v)_{v \in I}$ as in Subsection \ref{basic-family}. 

\begin{definition} 
\label{asstwo}
We say that a collection $(\alpha_v)_{v \in I}$ of $*$-homomorphisms
\begin{equation} 
\label{local-left-action} 
\alpha_v : A'_v \longrightarrow {\rm End}_{A_v}^*(X_v),
\end{equation}
is {\em compatible with} $(X_{v},x_v)_{v \in I}$ if for all but finitely many places it holds that
$$\alpha_v(p'_v){\cdot}x_v=x_v.$$ 
If this is the case, we also say that $(X_{v},x_{v})_{v \in I}$ is a collection of $\left ( (A'_v,p'_{v}),(A_v,p_{v}) \right )_{v \in I}$-correspondences.
\end{definition}

As before, let us put 
$$A':=\sideset{}{'}{\bigotimes}_{ v \in I} (A'_v,p'_v). \qquad A:=\sideset{}{'}{\bigotimes}_{v \in I} (A_v,p_v).$$
Let
$$X:=\sideset{}{'}{\bigotimes}_{v \in I} (X_v,x_v)$$
denote the restricted product of the family $(X_v,x_v)_{v \in I}$. 

Consider  the map
$$\alpha: A' \longrightarrow {\rm End}_{A}^*(X)$$
defined by declaring the action for simple tensors $w=\otimes_{v}w_v \in X$ and  $a'=\otimes_{v}a'_v \in A'$ to be
$$\alpha(a')(w)= \otimes_{v}(\alpha_v(a'_v)(w_v)).$$
It can be shown that $\alpha$ is a $*$-homomorphism by giving it a functorial definition. There is an obvious left action $\alpha_S: A'_S \longrightarrow {\rm End}^*_{A_S}(X_S)$. The connecting maps  $\iota^{S'}_{S}$ defined in Equation \eqref{embedding} are clearly compatible with $(\alpha_S)_{S \, finite}$, so there is an induced map $\alpha: A' \longrightarrow {\rm End}^*_{A}(X)$ in the limit of the maps $A'_S \longrightarrow {\rm End}^*_{A}(X)$ induced by $\alpha_S$ and the natural maps ${\rm End}^*_{A_S}(X_S)\longrightarrow {\rm End}^*_{A}(X)$.

We summarize the above as follows.

\begin{definition} 
\label{main-result-2}
 Let $I$ be a countable index set. Assume that for every $v \in I$, we are given a tuple $(X_v,x_v, A'_v,p'_v,A_v,p_v)$ where 
$X_v$ is an $(A'_v,A_v)$-correspondence, $x_v$ is a distinguished vector in $X_v$ and $p'_v,p_{v}$ are projections in $A'_v$ and $A_v$ respectively. Assume  further that for all but finitely many $v \in I$, we have that $x_v, p'_v,p_v$ are nonzero and that 
$$\langle x_v, x_v \rangle\subrangle{v}=p_v,$$
and 
$$p'_v{\cdot}x_v=x_v.$$ 
Then as discussed above, the restricted tensor product 
$$\sideset{}{'}{\bigotimes}_{v \in I} (X_v,x_v)$$ 
carries a natural $\left ( \sideset{}{'}{\bigotimes}_{ v \in I} (A'_v,p'_v),\sideset{}{'}{\bigotimes}_{ v \in I} (A_v,p_v) \right )$-correspondence structure, which we call the {\it restricted tensor product} of the collection 
$(X_v,x_v, A'_v,p'_v,A_v,p_v)_{v \in I}$ of $C^*$-correspondences.
\end{definition}

\subsection{Internal product commutes with restricted tensor product}

\begin{proposition} 
\label{IP-commutes-RTP} 
Let $(X_v,x_{v})_{v \in I}$ and  $(Y_v,y_{v})_{v \in I}$ be collections of $((A_{v}',p_{v}'), (A_v,p_v))_{v \in I}$ and $((A_{v},p_{v}), (A^{''}_v,p^{''}_v))_{v \in I}$ correspondences respectively, both as in Definition \ref{asstwo}.  Let us put 
$$A':= \sideset{}{'}\bigotimes_{v \in I} (A'_v,p'_v), \quad A:= \sideset{}{'}\bigotimes_{v \in I} (A_v,p_v), \quad A'':= \sideset{}{'}\bigotimes_{v \in I} (A^{''}_v,p^{''}_v),$$
Then  then we have a canonical, unitary isomorphism of  $\left (A',A'' \right )$-correspondences
$$\left ( \sideset{}{'}\bigotimes_{v \in I} (X_v,x_v) \right ) \bigotimes_{A} \left ( \sideset{}{'}\bigotimes_{v \in I} (Y_v,y_v) \right )  \simeq \sideset{}{'}\bigotimes_{v \in I} \left( X_v \otimes_{A_v} Y_v , x_v\otimes_{A_v} y_v\right ),$$
defined on simple tensors by 
\begin{equation}
\label{canonon}
(\otimes_{v} w_v)\otimes (\otimes_{v} y_v)\mapsto \otimes_{v} (w_v\otimes_{A_v} y_v).
\end{equation}

\end{proposition}

\begin{proof} 
Let us start by showing that $( X_v \otimes_{A_v} Y_v , x_v\otimes_{A_v} y_v)$ is a collection of $((A_{v}',p_{v}'), (A_v'',p_v''))_{v \in I}$-correspondences. It is clear that $p_v'(x_v\otimes_{A_v} y_v)=x_v\otimes_{A_v} y_v$ and, moreover,
$$\langle x_v\otimes_{A_v} y_v,x_v\otimes_{A_v} y_v\rangle_{X_v\otimes_{A_v}Y_v}=\langle y_v,p_vy_v\rangle_{Y_v}=\langle y_v,y_v\rangle_{Y_v}=p_v''.$$

For notational simplicity, set $X:=\sideset{}{'}{\bigotimes}_{v \in I} (X_v,x_v)$ and $Y:=\sideset{}{'}{\bigotimes}_{v \in I} (Y_v,y_v)$. By linearity, the map \eqref{canonon} extends to a surjection from a dense subspace of $X \otimes_{A} Y$ to a dense subspace of $\sideset{}{'}\bigotimes \left( X_v \otimes_{A_v} Y_v,x_v\otimes_{A_v}y_v \right )$. A short computation shows that on this dense subspace the map \eqref{canonon} is isometric. Therefore, \eqref{canonon} extends to an isometry with dense range, i.e. a unitary isomorphism.
\end{proof}

Observe that Proposition \ref{local-global-compatibility-1} follows from this proposition once we take $A^{''}_v = \C$, forcing the correspondences $Y_v$ to be simply $*$-representations of $A_v$'s.

\subsection{Compact action on the left}
In many applications the left action of $A'_v$ is by compact Hilbert $C^*$-module operators. It is of interest to know whether these compact actions bundle up to a left $A'$-action by $A$-compact operators. 
 
\begin{definition} 
\label{assthree}
Assume that $(X_{v},x_{v})_{v\in I}$ is a collection of $((A'_v,p'_v),(A_v,p_v))_{v\in I}$ correspondences as in Definition \ref{asstwo}. 

\begin{itemize}
\item We say that $(X_{v},x_v)_{v\in I}$ is a {\em coherent collection of correspondences} if at all but finitely many places, 
$$\alpha_v(p'_v)=p_{x_v},$$
where $p_{x_v}\in \mathbb{K}_{A_v}(X_v)$ is the projection along $x_v \in X_v$ (cf. Proposition \ref{comaokdapdops}).
\item A coherent collection of correspondences $(X_{v},x_v)_{v\in I}$ is {\em compact} if also the range of the map $\alpha_v$ is contained in $\mathbb{K}_{A_v}(X_v)$ at all places $v$.
\item A coherent collection of correspondences $(X_{v},x_v)_{v\in I}$ is {\em type I} if also the range of the map $\alpha_v$ contains $\mathbb{K}_{A_v}(X_v)$ at all places $v$.
\end{itemize}
\end{definition}

\begin{remark}
We note that $(X_v,x_v)_{v\in I}$ is a coherent collection of $((A'_v,p'_v),(A_v,p_v))_{v\in I}$-correspondences if and only if for all but finitely many places 
$$\alpha_v(p'_v)X_v=x_vA_v.$$
This follows from the compatibility condition of Definition \ref{def:compvector}.
\end{remark}

\begin{proposition}
\label{kljnajkdna}
When $(X_v,x_v)_{v\in I}$ is a coherent collection of $((A'_v,p'_v),(A_v,p_v))_{v\in I}$-correspondences, the map $\alpha$ from Definition \ref{main-result-2} satisfies the range condition that 
$$\alpha(A') \subseteq \sideset{}{'}{\bigotimes}_{v \in I} (\End_{A_v}^*(X_v),p_{x_v}).$$ 
Moreover, if $(X_v,x_v)_{v\in I}$ is compact, the map $\alpha$ from Definition \ref{main-result-2} satisfies the range condition that 
$$\alpha(A')\subseteq \mathbb{K}_{A}(X),$$ 
that is, $A'$ acts by $A$-compact operators on $X$. And finally, if $(X_v,x_v)_{v\in I}$ is type I, the map $\alpha$ from Definition \ref{main-result-2} satisfies the range condition that 
$$\alpha(A') \supseteq \mathbb{K}_{A}(X),$$ 
that is, the image of $A'$ contains all $A$-compact operators on $X$.
\end{proposition}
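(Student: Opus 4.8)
The plan is to reduce everything to the dense $*$-subalgebra $A'_0=\bigcup_S A'_S$ of $A'$, where $S$ ranges over finite subsets of $I$ containing the exceptional set $E$, and to exploit that $\alpha$ is the direct limit of the local actions $\alpha_S:A'_S\to\End_{A_S}^*(X_S)$ of Proposition \ref{main-result-2}, followed in each case by a continuity argument. First I would observe that on a simple tensor $a'=\otimes_v a'_v\in A'_S$ (so that $a'_v=p'_v$ for $v\notin S$) the defining formula gives $\alpha(a')=\otimes_v\alpha_v(a'_v)$, and coherence yields $\alpha_v(a'_v)=\alpha_v(p'_v)=p_{x_v}$ for all but finitely many $v$. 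This single observation drives all three range conditions.

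For the first range condition, the computation says precisely that $\alpha(a')$ is a simple tensor of adjointable operators equal to $p_{x_v}$ at all but finitely many places, hence lies in the algebraic part $\bigcup_S\bigotimes_{v\in S}\End_{A_v}^*(X_v)$ of the restricted tensor product $\sideset{}{'}{\bigotimes}_v(\End_{A_v}^*(X_v),p_{x_v})$. I would view the latter as a $C^*$-subalgebra of $\End_A^*(X)$ by letting a simple tensor act componentwise on $X_0$, which is well defined because $p_{x_v}x_v=x_v$ for all but finitely many $v$ by Remark \ref{rmk:automatic}, the resulting embedding being isometric since each $p_{x_v}$ is a projection of norm one (cf. the norm computation \eqref{prodnorm}). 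Since $\alpha$ is a $*$-homomorphism of $C^*$-algebras it is continuous, and $A'_0$ is dense in $A'$; taking closures gives $\alpha(A')\subseteq\sideset{}{'}{\bigotimes}_v(\End_{A_v}^*(X_v),p_{x_v})$.

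For the compact case the additional hypothesis places each factor $\alpha_v(a'_v)$ in $\mathbb{K}_{A_v}(X_v)$, so $\alpha(a')$ now lies in the algebraic part of $\sideset{}{'}{\bigotimes}_v(\mathbb{K}_{A_v}(X_v),p_{x_v})$, which by Proposition \ref{comaokdapdops} is a dense subalgebra of $\mathbb{K}_A(X)$. As $\mathbb{K}_A(X)$ is closed and $\alpha$ is continuous, $\alpha(A')=\alpha(\overline{A'_0})\subseteq\mathbb{K}_A(X)$.

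The type I case is where the real work lies, since the containment now runs in the opposite direction and a mere density statement no longer suffices. Using Proposition \ref{comaokdapdops} I would reduce to showing that $\alpha(A')$ contains every simple-tensor generator $T=\otimes_v T_v$ of $\mathbb{K}_A(X)$, with $T_v\in\mathbb{K}_{A_v}(X_v)$ and $T_v=p_{x_v}$ for $v\notin S$. For $v\in S$ the type I hypothesis $\mathbb{K}_{A_v}(X_v)\subseteq\alpha_v(A'_v)$ supplies $a'_v\in A'_v$ with $\alpha_v(a'_v)=T_v$, while for $v\notin S$ coherence lets me take $a'_v=p'_v$, so that $a'=\otimes_v a'_v\in A'_S$ satisfies $\alpha(a')=T$. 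By linearity $\alpha(A')$ then contains the entire algebraic restricted tensor product, which is dense in $\mathbb{K}_A(X)$. The main obstacle is upgrading this density to the full containment $\mathbb{K}_A(X)\subseteq\alpha(A')$: for this I would invoke the standard fact that the image of a $*$-homomorphism between $C^*$-algebras is closed, since $A'/\ker\alpha\to\End_A^*(X)$ is injective and hence isometric, so that $\alpha(A')$ is a closed set containing a dense subset of $\mathbb{K}_A(X)$ and therefore contains all of it.
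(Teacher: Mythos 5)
Your proposal is correct and takes essentially the same approach as the paper: both reduce to the dense subalgebra $\bigcup_S A'_S$, use coherence to identify $\alpha_v(p'_v)$ with $p_{x_v}$ at all but finitely many places, and pass to limits/closures, with your simple-tensor computation being the pointwise version of the paper's commuting-diagram argument. Your type I argument is precisely the ``straightforward density argument'' via Proposition \ref{comaokdapdops} that the paper leaves implicit, and you correctly supply the one fact needed to upgrade density to containment, namely that the image of a $*$-homomorphism of $C^*$-algebras is closed.
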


\begin{proof}
Let $\iota_S:X_S\to X$ \ be the isometry in Equation \eqref{eq:inclusions}. Because we have a coherent collection of correspondences, $\alpha_v(p'_v)=p_{x_v}$ for all but finitely many indices, and for any finite $S$ the following diagram commutes:
\begin{equation}
\label{ljnkjlnkjnad}
\begin{tikzcd}
A'_S \ar[rr,"\alpha_S"]\ar[dd,"\alpha"]& &\End_{A_S}^*(X_S)\ar[dd]  \ar[ddll,"\mathrm{Ad}(\iota_{S})"] \\
&&\\
 \End_{A}^*(X)& &\sideset{}{'}{\bigotimes}_{v \in I} (\End_{A_v}^*(X_v),p_{x_v})\ar[ll]  
\end{tikzcd}
\end{equation}
Here we use the functorial maps 
$$\End_{A_S}^*(X_S)=\bigotimes_{v\in S} \End_{A_v}^*(X_v)\hookrightarrow \sideset{}{'}{\bigotimes}_{v \in I} (\End_{A_v}^*(X_v),p_{x_v}),$$ 
in the vertical right arrow and the natural inclusion $\sideset{}{'}{\bigotimes}_{v \in I} (\End_{A_v}^*(X_v),p_{x_v})\hookrightarrow  \End_{A}^*(X)$ in the bottom horizontal arrow. By taking the limit in the commuting diagram \eqref{ljnkjlnkjnad} we conclude that $\alpha(A') \subseteq \sideset{}{'}{\bigotimes}_{v \in I} (\End_{A_v}^*(X_v),p_{x_v})$.

If $(X_{v},x_v)_{v\in I}$ is a compact collection of $(A_v',p'_v),(A_v,p_v))_{v\in I}$-correspondences, then the algebra $A'_S$ \ acts as $A_S$-compact operators on $X_S$. Since $A_S \subseteq A$ is a subalgebra, we conclude that as soon as $\alpha_v : A'_v \longrightarrow \mathbb{K}_{A_v}(X_v)$ for all places $v$, we have the following range condition for any finite set of places $S$:
\begin{align*}
\alpha_S: A'_S &\longrightarrow \mathbb{K}_{A}(X_S).
\end{align*}
Since $\iota_{S}$ is inner-product preserving, we have a $*$-homomorphism
$$\alpha^{X}_{S}:\mathbb{K}_{A_{S}}(X_S) \to \mathbb{K}_{A}(X).$$
We can then combine these two facts with Proposition \ref{comaokdapdops}, into an argument similar to above showing that if we have a compact correspondence, then $\alpha(A')\subseteq \mathbb{K}_{A}(X)$. 

The proof that for a type I collection we have $\alpha(A')\supseteq \mathbb{K}_{A}(X)$ can with Proposition \ref{comaokdapdops} be reduced to a straightforward density argument.
\end{proof}

\begin{remark}
The condition of coherence in Definition \ref{assthree} is necessary in compact compatibility for the conclusion of Proposition \ref{kljnajkdna} to hold. The example $I=\mathbb{N}$, $A_v'=M_2(\C)$, $p_v'=1$, $A_v=\C$, $p_v=1$, $X_v=\C^2$ and $x_v=(1,0)^T$ provides an example where the range of the map $\alpha_v$ is contained in $\mathbb{K}_{A_v}(X_v)$ at all places $v$ holds but we do not have a coherent collection of correspondences and the conclusion of Proposition \ref{kljnajkdna} fails to hold since in this case $A'$ is unital and $X$ is an infinite-dimensional Hilbert space. 
\end{remark}

\subsection{A digression on coherent collections of correspondences}

We discuss how the condition of coherence in a collection of correspondences in Definition \ref{assthree} holds under some hypotheses that hold whenever $(X_v,x_v)_{v\in I}$ is a collection of modules with each $X_v$ defining a Morita equivalences from an ideal in $A_v'$ to an ideal in $A_v$. This situation is of interest in the companion paper \cite{GMSTheta} on global $\theta$-correspondences. 

\begin{proposition} 
\label{autoassthree}
Let $(X_v,x_v)_{v\in I}$ be a collection of $((A_v',p_v'),(A_v,p_v))_{v\in I}$-correspondences. If for all but finitely many indices $v$, we have that
\begin{enumerate} 
\item the balanced tensor product with $X_v$, i.e. $(\pi,\mathcal{H})\mapsto (\alpha_v\otimes _\pi 1, X_v\otimes _{A_v}\mathcal{H})$, takes irreducible representations of $A_v$ to irreducible representations of $A_v'$,
\item the projections $p'_v \in A'_v$ are of rank at most one, and 
\item if the balanced tensor product with $X_v$ takes an irreducible representation $\pi_v$ of $A_v$ to a representation $\pi_v'$ of $A_v'$ such that $\pi_v'(p_v')\neq 0$, then $\pi_v(p_v)\neq 0$,
\end{enumerate} 
then $(X_v,x_v)_{v\in I}$ is coherent collection of correspondences. 
\end{proposition}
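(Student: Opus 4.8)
The plan is to fix an index $v$ in the cofinite set of places where the three hypotheses, the module compatibility $\langle x_v,x_v\rangle_v=p_v$, and the left-action compatibility $\alpha_v(p_v')x_v=x_v$ all hold, and to prove the single identity $\alpha_v(p_v')=p_{x_v}$ in $\End_{A_v}^*(X_v)$ there; coherence for all but finitely many $v$ then follows at once. Both sides are projections, $p_{x_v}$ by Remark \ref{rmk:automatic}. First I would record an inclusion of ranges: since $\alpha_v(p_v')$ is $A_v$-linear and fixes $x_v$, it fixes every $x_va$, hence fixes $\overline{x_vA_v}$, which is exactly the range of $p_{x_v}$ (indeed $p_{x_v}\xi=x_v\langle x_v,\xi\rangle_v$, and $p_{x_v}(x_va)=x_v p_v a=x_va$ using $x_v=x_vp_v$). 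Thus $\alpha_v(p_v')p_{x_v}=p_{x_v}$, i.e. $p_{x_v}\leq\alpha_v(p_v')$, and $r:=\alpha_v(p_v')-p_{x_v}$ is again a projection.

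The heart of the argument is to show $r=0$ by testing against the balanced tensor products appearing in hypothesis (1). For an irreducible representation $(\pi_v,\mathcal H_v)$ of $A_v$, form $\pi_v':=\alpha_v\otimes_\pi 1$ on $X_v\otimes_{A_v}\mathcal H_v$, so that $\pi_v'(p_v')=\alpha_v(p_v')\otimes 1$ and $r\otimes 1=\pi_v'(p_v')-p_{x_v}\otimes 1$. By (1) the representation $\pi_v'$ is irreducible, and by (2) together with the definition of rank at most one, $\pi_v'(p_v')$ has rank at most one. I would then split into two cases according to $\pi_v(p_v)$. If $\pi_v(p_v)\neq 0$, the computation $\langle x_v\otimes k,x_v\otimes k\rangle=\langle k,\pi_v(p_v)k\rangle$ shows $p_{x_v}\otimes 1\neq 0$; being a nonzero subprojection of the rank-at-most-one projection $\pi_v'(p_v')$—the order $p_{x_v}\leq\alpha_v(p_v')$ being preserved under $T\mapsto T\otimes 1$—it must equal it, so $r\otimes 1=0$. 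If $\pi_v(p_v)=0$, then since $\langle x_v,\xi\rangle_v\in p_vA_v$ for every $\xi$ we get $p_{x_v}\otimes 1=0$, while the contrapositive of hypothesis (3) forces $\pi_v'(p_v')=0$; again $r\otimes 1=0$.

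Finally I would globalize: for each $\xi\in X_v$, the vanishing $r\otimes 1=0$ in every irreducible $\pi_v$ gives $\pi_v(\langle r\xi,r\xi\rangle_v)=0$, and since the irreducible representations of the $C^*$-algebra $A_v$ separate its points we conclude $\langle r\xi,r\xi\rangle_v=0$, hence $r\xi=0$; as $\xi$ was arbitrary, $r=0$ and $\alpha_v(p_v')=p_{x_v}$. I expect the main obstacle to be precisely this passage from the local, representation-theoretic rank conditions back to the single operator identity in $\End_{A_v}^*(X_v)$: one must recognize that the family of balanced tensor products with irreducible $A_v$-representations is faithful enough to detect $r$, and, crucially, that the degenerate case $\pi_v(p_v)=0$—where $p_{x_v}\otimes 1$ vanishes and no rank count is available—is exactly what hypothesis (3) is there to handle. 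The case distinction on $\pi_v(p_v)$ is what the three hypotheses are jointly tailored to cover.
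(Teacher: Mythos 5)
Your proof is correct and follows essentially the same route as the paper's: both establish $p_{x_v}\leq\alpha_v(p'_v)$ from the compatibility conditions, test the difference in every balanced tensor product with an irreducible $A_v$-representation (where hypotheses (1) and (2) give a rank-at-most-one projection and hypothesis (3) handles the degenerate case $\pi_v(p_v)=0$), and conclude by separation of points. The only cosmetic difference is that you split cases on $\pi_v(p_v)$ while the paper splits on $\pi'_v(p'_v)$, using hypothesis (3) in contrapositive versus direct form.
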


\begin{proof} For a representation $(\pi,H_{\pi})$ of $A_{v}$ we write $Q_{\pi}:=p_{v}'\otimes 1_{H_{\pi}}$ and $P_{\pi}:=p_{x_{v}}\otimes 1_{H_{\pi}}$. We note that for $T\in \End^{*}_{A_{v}}(X_{v})$, $T\otimes 1_{H_{\pi}}=0$ if and only if for all $x\in X_v$ it holds that $\pi(\langle Tx,Tx\rangle_{X_v})=0$. Since the irreducible representations separate the points of $A_{v}$, in order to show that $p_{v}'=p_{x_{v}}$ it suffices to show that  have $Q_{\pi}=P_{\pi}$ for all irreducible representations $(\pi,H_{\pi})$.

By Definition \ref{asstwo}, the projections $p_{v}',p_{x_{v}}\in\End^{*}_{A_{v}}(X_{v})$ satisfy $p_{v}'p_{x_{v}}=p_{x_{v}}$. For an irreducible representation $(\pi,H_{\pi})$ we thus have $Q_{\pi}P_{\pi}=P_{\pi}$ and $\rm{im}\,P_{\pi}\subset \rm{im}\,Q_{\pi}$. Therefore $Q_{\pi}=0$ implies $P_{\pi}=0$ and in particular then $P_{\pi}=Q_{\pi}$. When $Q_{\pi}\neq 0$, properties (1) and (2) give that $Q_{\pi}$ has rank one. Moreover, property (3) implies that also $P_{\pi}\neq 0$, and since $\rm{im}\,P_{\pi}\subset \rm{im}\,Q_{\pi}$, we must have $P_{\pi}=Q_{\pi}$ as well.
\end{proof}

\subsection{Type I representations and type I collections}

We discuss the reason for the term type I collection in Definition \ref{assthree}. We say that an irreducible representation $(\pi,H)$ of a $C^*$-algebra is type $I$ if $\pi(A)\supseteq \mathbb{K}(H)$. Note that by Guichardet's theorem (see Theorem \ref{factorization-Cstar}) if $A=\sideset{}{'}{\bigotimes}_{v \in I} (A_v,p_v) $, for $(A_v,p_v)_{v \in I}$ a collection of type I $C^*$-algebras equipped with distinguished projections of rank at most one as in Subsection \ref{basic-family}, then any irreducible representation $(\pi,H)$ factors as
\[H=\sideset{}{'}{\bigotimes}_{v \in I} (H_v,h_v),\quad \pi=\sideset{}{'}{\bigotimes}_{v \in I} \pi_v.\] Proposition \ref{comaokdapdops} implies that $\pi$ is always type I, given that each $A_v$ is type I.

The definition above extends to correspondences, we say that an $(A',A)$-correspondence $X$ is type $I$ if $A'$ acts on $X$ via a $*$-homomorphism $\alpha:A'\to \End_{A}^*(X)$ with $\alpha(A')\supseteq \mathbb{K}_A(X)$. We make the following observation. 

\begin{proposition} 
\label{kjfkjnadak}
Assume that $X$ is an $(A',A)$-correspondence and $Y$ an $(A,A'')$-correspondence. If $X$ and $Y$ are type $I$, then $X\otimes_AY$ is also type $I$. 
\end{proposition}

We note in particular that if $X$ is a type $I$ $(A',A)$-correspondence and $(\mathcal{H},\pi)$ is a type $I$ representation of $A$, then $X\otimes\mathcal{H}$ is a type $I$ representation of $A'$.

\begin{proof}
We identify $ \mathbb{K}_A(X)\otimes 1_Y$ with a subalgebra of $\End_{A''}(X\otimes_A Y)$. If $Y$ is type $I$, we have the inclusion
$$\mathbb{K}_{A''}(X\otimes Y)\subseteq \mathbb{K}_A(X)\otimes 1_{Y}.$$
Write $\alpha$ for the left action of $A'$ on $X$. If $X$ is type $I$, then $\mathbb{K}_A(X)\subseteq \alpha(A')$ and we have the inclusions
$$\mathbb{K}_{A''}(X\otimes Y)\subseteq \mathbb{K}_A(X)\otimes 1_{Y}\subseteq \alpha(A')\otimes 1_{Y}=(\alpha\otimes 1)(A').$$
Here $\alpha\otimes 1$ is the left action of $A'$ on $X\otimes_A Y$, and the proof is complete.
\end{proof}

From Proposition \ref{kljnajkdna} and Proposition \ref{kjfkjnadak} we conclude the following. 

\begin{proposition} 
Let $(X_v,x_{v})_{v \in I}$ be a type $I$ collection of $((A_{v}',p_{v}'), (A_v,p_v))_{v \in I}$-correspondences. Then $X:=\sideset{}{'}\bigotimes \left( X_v, x_v\right )$ is also type $I$.

In particular, in this case the induction map from representations of $A$ to representations of $A'$ defined from $X$ as in Definition \ref{main-result-2} satisfies that on factorizable $A$-representations $$(\pi, \mathcal{H}) \simeq \left ( \sideset{}{'}\bigotimes \pi_v, \sideset{}{'}\bigotimes_{v}( \mathcal{H}_v,h_v) \right ),$$ the $A'$-representation
$$
X \otimes_{A} \mathcal{H} \simeq \sideset{}{'}\bigotimes \left( X_v \otimes_{A_v} \mathcal{H}_v ,x_v\otimes_{A_v}h_v\right ),
$$
is type I as soon as $(\pi, \mathcal{H})$ is.
\end{proposition}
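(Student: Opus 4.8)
The plan is to prove the slightly stronger and purely global statement that the image of $A'$ under the induced representation contains every compact operator on $\mathcal{K}:=X\otimes_A\mathcal{H}$; irreducibility, and hence the full type I property, then follows formally. I write $T\otimes_\pi 1$ for the operator $\xi\otimes h\mapsto T\xi\otimes h$ on $\mathcal{K}$ attached to $T\in\End_A^*(X)$, so that the induced $A'$-representation of Proposition \ref{main-result-2} is $a'\mapsto \alpha(a')\otimes_\pi 1$ and $T\mapsto T\otimes_\pi 1$ is a $*$-homomorphism $\End_A^*(X)\to\mathbb{B}(\mathcal{K})$. We may assume $\mathcal{K}\neq 0$, since otherwise there is nothing to prove.

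I would first isolate the two hypotheses that do the work. Because $(X_v,x_v)_{v\in I}$ is a type I collection, Proposition \ref{kljnajkdna} gives $\alpha(A')\supseteq \mathbb{K}_A(X)$, and applying the $*$-homomorphism $T\mapsto T\otimes_\pi 1$ yields
$$(\alpha\otimes_\pi 1)(A')\supseteq \{T\otimes_\pi 1 : T\in\mathbb{K}_A(X)\}.$$
On the other hand, the assumption that $(\pi,\mathcal{H})$ is type I means precisely that $\pi$ is irreducible with $\pi(A)\supseteq\mathbb{K}(\mathcal{H})$. The proposition therefore reduces to a statement that no longer sees the restricted tensor product structure: for a right Hilbert $A$-module $X$ and an irreducible representation $\pi$ with $\pi(A)\supseteq\mathbb{K}(\mathcal{H})$, one has $\{T\otimes_\pi 1:T\in\mathbb{K}_A(X)\}\supseteq\mathbb{K}(\mathcal{K})$.

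The key step is a direct computation exhibiting every rank-one operator on the Hilbert space $\mathcal{K}$ as the image of a rank-one module operator. Fix $\xi,\zeta\in X$ and $h,h'\in\mathcal{H}$, and use $\pi(A)\supseteq\mathbb{K}(\mathcal{H})$ to pick $a\in A$ with $\pi(a)=h\langle h',\cdot\rangle_{\mathcal{H}}$. Unwinding the defining relation $\xi b\otimes h=\xi\otimes\pi(b)h$ on $\mathcal{K}$ together with the inner product $\langle\xi\otimes h,\zeta\otimes l\rangle_{\mathcal{K}}=\langle h,\pi(\langle\xi,\zeta\rangle)l\rangle_{\mathcal{H}}$, a short calculation shows that the module operator $T_{\xi,\zeta a^*}\in\mathbb{K}_A(X)$ satisfies
$$T_{\xi,\zeta a^*}\otimes_\pi 1=(\xi\otimes h)\langle\zeta\otimes h',\cdot\rangle_{\mathcal{K}},$$
the right-hand side being the rank-one operator on $\mathcal{K}$ determined by the elementary tensors $\xi\otimes h$ and $\zeta\otimes h'$. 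As the elementary tensors $\xi\otimes h$ are dense in $\mathcal{K}$ and $\{T\otimes_\pi 1:T\in\mathbb{K}_A(X)\}$ is norm-closed, being the image of the $C^*$-algebra $\mathbb{K}_A(X)$ under a $*$-homomorphism, passing to closed linear spans gives $\mathbb{K}(\mathcal{K})\subseteq\{T\otimes_\pi 1:T\in\mathbb{K}_A(X)\}$, which is the claim.

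Combining this with the first input gives $(\alpha\otimes_\pi 1)(A')\supseteq\mathbb{K}(\mathcal{K})$. Since $\mathbb{K}(\mathcal{K})$ already acts irreducibly on $\mathcal{K}\neq 0$, any closed invariant subspace for the induced $A'$-representation is invariant under $\mathbb{K}(\mathcal{K})$ and hence trivial, so the induced representation is irreducible; as its image contains $\mathbb{K}(\mathcal{K})$ it is type I, which is what we wanted. I expect the one genuinely delicate point to be the bookkeeping in the displayed identity: getting the adjoints and the right-module conventions right (the $a^*$ in $T_{\xi,\zeta a^*}$ and the resulting $\pi(a)$ rather than $\pi(a)^*$), since it is exactly this identity that forces the hypothesis $\pi(A)\supseteq\mathbb{K}(\mathcal{H})$ to intervene—without it the operators $T\otimes_\pi 1$ need not be compact at all, as the case $A=X=\mathbb{C}$ with $\mathcal{H}$ infinite-dimensional already shows.
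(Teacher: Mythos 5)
Your proof is correct, and it takes a genuinely different route from the paper's. Both arguments begin the same way, invoking Proposition \ref{kljnajkdna} to get $\mathbb{K}_A(X)\subseteq\alpha(A')$ from the type I hypothesis on the collection. But for the second half, the paper stays inside its local-global framework: it cites Proposition \ref{comaokdapdops} (together with the factorized form of $X\otimes_A\mathcal{H}$ from Proposition \ref{local-global-compatibility-1}) to obtain $\mathbb{K}(X\otimes_A\mathcal{H})\subseteq\mathbb{K}_A(X)\otimes 1_{\mathcal{H}}$, so the restricted-tensor-product structure of both $X$ and $\mathcal{H}$ is used in an essential way. You instead prove a general, structure-free lemma: for \emph{any} Hilbert $A$-module $X$ and \emph{any} irreducible representation $\pi$ with $\pi(A)\supseteq\mathbb{K}(\mathcal{H})$, one has $\mathbb{K}(X\otimes_A\mathcal{H})\subseteq\{T\otimes_\pi 1:T\in\mathbb{K}_A(X)\}$, via the explicit identity $T_{\xi,\zeta a^*}\otimes_\pi 1=(\xi\otimes h)\langle\zeta\otimes h',\cdot\rangle$ when $\pi(a)=h\langle h',\cdot\rangle$; I checked the computation, including the placement of the adjoint (it hinges on $\langle\zeta a^*,\eta\rangle=a\langle\zeta,\eta\rangle$ and the balancing relation $\xi b\otimes h=\xi\otimes\pi(b)h$), and it is right. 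Your route buys three things: it never uses factorizability of $(\pi,\mathcal{H})$, so it proves a formally stronger statement than the one claimed; it is self-contained modulo Proposition \ref{kljnajkdna}; and it explicitly establishes irreducibility of the induced representation, which the paper's definition of a type I representation presupposes but the paper's proof never addresses (the paper only verifies the compact-containment half of the definition). What the paper's route buys in exchange is brevity and consistency with the surrounding machinery, since Propositions \ref{local-global-compatibility-1} and \ref{comaokdapdops} are already in place, though its one-line appeal to Proposition \ref{comaokdapdops} is terse precisely where your rank-one computation supplies the missing detail.
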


%%%%%%%%%%%%%%%%%%
%%%%%%%%%%%%%%%%%%
\section{$C^*$-correspondences for adelic parabolic induction}
\label{sec:globalpara}

A direct application of the results of Section \ref{main-resultscorr} is to the $C^*$-correspondence approach of Rieffel to induction of unitary representations. As a consequence, we will treat the theory of parabolic induction of representations which was studied via  $C^*$-correspondence in the series of works \cite{Clare-13,CCH-16,CCH-18}.

\subsection{Induced representations}  

We start with a quick recall of the theory of induction of unitary representations (for details, see \cite{book_Folland}) after which we discuss parabolic induction (see more in \cite{getzhahn,knapprep,wallachbook}). Let $G$ be a locally compact Hausdorff group with a closed subgroup $H$. Given a unitary representation $(\pi,V_\pi)$ of $H$, we consider the space $L^G_H(V_\pi)$ of continuous functions $f : G \to V_\pi$ satisfying the conditions 
\begin{enumerate}
\item[(i)] the support of $f$ has compact image under the projection $G \to G/H$,
\item[(ii)]  $f(gh)=\pi(h^{-1})f(g)$ for all  $h \in H$ and  $g \in G$. 
\end{enumerate}
We fix a quasi-invariant Borel measure $\mu$ on $G/H$. 
The Hermitian the form
$$\langle f_1, f_2 \rangle := \int_{G/H} \langle f_1(g), f_2(g) \rangle\subrangle{V_\pi} d\mu(g)$$
equips $L^G_H(V_\pi)$ with a pre-Hilbert space structure. We denote the Hilbert space completion of $L^G_H(V_\pi)$ by ${\rm Ind}_H^G(V_\pi)$. The group $G$ acts on $L^G_H(V_\pi)$ via left translation: 
$$(g{\cdot}f)(g')=f(g^{-1}g').$$
To obtain a unitary representation of $G$ on ${\rm Ind}_H^G(V_\pi)$, we need to tweak the action of $G$ as follows. Let $\delta_G, \delta_H$ denote the modulus characters of $G$ and $H$. The function $h \mapsto \delta_G(h)/\delta_H(h)$ on $H$ admits an extension, denoted $\Delta$, to $G$. One can verify that the action 
$$(g{\cdot}f)(g')=\Delta(g)^{1/2}f(g^{-1}g')$$
defined a unitary action of $G$ on ${\rm Ind}_H^G(V_\pi)$. The equivalence class of the unitary representation ${\rm Ind}_H^G(\pi)$ does not depend on the choice of the quasi-invariant measure $\mu$.

The case when $G$ is a reductive group over the integers and $H=P$ for a parabolic subgroup $P\subseteq G$ is of particular interest. For a local field $F$, assume that $(\pi,V_\pi)$ is a unitary representation of the Levi factor $L(F)$ in the Langlands factorization $P=LN$, and extend $\pi$ trivially to $P(F)$ by $\pi(ln)=\pi(l)$, then ${\rm Ind}_{P(F)}^{G(F)}(V_\pi)$ is called a parabolically induced representation. Parabolic induction plays an important role in parametrizing all admissible representations of reductive groups, see e.g. \cite{BZind, knapprep}.

\subsection{Rieffel's induction $C^*$-correspondence} Let $G$ be a locally compact Hausdorff group with a closed subgroup $H$. One obtains a $(C^*(G),C^*(H))$-correspondence $X^G_H$ by completing $C_c(G)$ in the norm obtained from the $C_c(H)$-valued inner product defined from the matrix coefficients of the natural right action of $H$ on $C_c(G)$. It is useful to note that this Hilbert module structure can also be obtained by viewing $C_c(G)$ as a pre-Hilbert module over itself in the standard way (i.e. by declaring $\langle a,b \rangle$ to be $a^*b$) and then descending to a pre-Hilbert module over $C_c(H)$ by applying the restriction map $p:C_c(G) \to C_c(H)$ (which is a conditional expectation). The natural left action of $G$ on $C_c(G)$ preserves this inner product and extends to an action of $C^*(G)$ on $X^G_H$ by adjointable $C^*(H)$-endomorphisms. 

Let $(\pi,V_\pi)$ be a unitary representation of $H$. Viewing it as a $C^*(H)$-representation, consider the $C^*(G)$-representation on the Hilbert space 
$X^G_H \otimes_{C^*(H)} V_\pi$. This is the integrated form of a unitary $G$-representation which we will denote ${\rm Ind}(X^G_H,\pi)$. The following is a well-known result of Rieffel (see \cite[Theorem 5.12]{Rieffel-74}).

\begin{theorem} Let $(\pi,V_\pi)$ be a unitary representation of $H$. The map
\begin{equation} \label{isometry} \Psi: X^G_H \otimes_{C^*(H)} V_\pi \longrightarrow {\rm Ind}^G_H(V_\pi),
\end{equation} 
defined by
$$\Psi(f \otimes_{C^*(H)} v)(s) := \int_H f(sh) \pi(h)(v) dh, \qquad f \in C_c(H), \ v \in V,$$
is an isometry of Hilbert spaces and it intertwines the $G$-representations ${\rm Ind}(X^G_H,\pi)$ and ${\rm Ind}_H^G(\pi)$. In other words, the $(C^*(G),C^*(H))$-correspondence $X^G_H$ captures the induction functor.
\end{theorem}

\subsection{Local-global compatibility} 

Let $(G_v,K_v)_{v \in I}$ be an admissible family of groups as in Definition \ref{def:adm} and for all $v\in I$, we fix a closed subgroup $H_v\subseteq G_v$. As in  Subsection \ref{setup}, we can form the locally compact groups
$$\mathbb{G}:=\sideset{}{'}{\prod} (G_v : K_v)  \quad\mbox{and}\quad \mathbb{H}:=\sideset{}{'}{\prod} (H_v : K_v\cap H_v)$$
The inclusions $H_v\hookrightarrow G_v$ allow us to view $\mathbb{H}$ as a closed subgroup of $\mathbb{G}$. 

For all the indices $v$ for which $K_v$ is compact open in $G_v$, we let $p'_v$ denote the projection in $C^*(G_v)$ given by the characteristic function of $K_v$ normalized so that $K_v$ has volume one. Similarly we $p_v$ denote the projection in $C^*(H_v)$ given by the characteristic function of $K_v \cap H_v$ normalized so that $K_v \cap H_v$ has volume one. 

We also let $\phi_v$ denote the element of Rieffel's induction $(C^*(G_v),C^*(H_v))$-correspondence $X^{G_v}_{H_v}$ obtained by viewing $p'_v$ as an element of $C_c(G_v)$.  Then it is easy to check that the collection $(X^{G_v}_{H_v}, \phi_v)_{v \in I}$ of  $\left ( (C^*(G_v),p'_v), (C^*(H_v),p_v) \right )_{v \in I}$-correspondences can be assembled into a restricted product as in Definition \ref{main-result-2}. The proposition below states that the $(C^*(\mathbb{G}),C^*(\mathbb{H}))$-correspondence we obtain from this restricted product captures the induction functor for the groups $\mathbb{G}$ and $\mathbb{H}$.

\begin{proposition} \label{induction-local-global} We have an isomorphism
$$X^{\mathbb{G}}_{\mathbb{H}} \simeq \sideset{}{'}\bigotimes_{v} (X^{G_v}_{H_v}, \phi_v)$$
of $(C^*(\mathbb{G}),C^*(\mathbb{H}))$-correspondences.
\end{proposition}
\begin{proof} 
Recall that the Rieffel induction $C^*$-correspondence $X^{\mathbb{G}}_{\mathbb{H}}$ is the completion of $C_{c}(\mathbb{G})$ in the norm coming from the $C_{c}(\mathbb{H})$-valued inner product which arises from the 
restriction map $C_{c}(\mathbb{G}) \to C_{c}(\mathbb{H})$.  Similarly, for each place $v$, $C_{c}(G_{v})$ is a right inner product module over $C_{c}(H_{v})$, which is dense in $X^{G_{v}}_{H_{v}}$. % and for each finite set of places $S$, $C_{c}(G_{S})$ is a right module over $C_{c}(H_{S})$. %For each finite set of places $S$ we have a restriction map 
%\[C_{c}(\mathbb{G}_{S})\xrightarrow{\mathrm{res}} C_{c}(\mathbb{H}_{S}),\]
For a finite set of indices $S$ containing the infinite places, there are dense inclusions 
$$\mathcal{X}_{S}:=\bigotimes^{\mathrm{alg}}_{v\in S}C_{c}(G_{v})\subseteq X_{S}:=\bigotimes_{v\in S} X^{G_v}_{H_v},$$ 
and 
$$\mathcal{B}_{S}:=\bigotimes^{\mathrm{alg}}_{v\in S}C_{c}(H_{v})\subseteq B_{S}:=\bigotimes_{v\in S} C^{*}(H_{v}).$$ 
If $S'=S \cup \{v\}$, there is a commutative diagram
\begin{equation}
\label{eq:restriction-compatible}
\xymatrix{\mathcal{X}_{S}\ar[r]\ar[d] & \mathcal{B}_{S}\ar[d]\\
\mathcal{X}_{S'}\ar[r] &\mathcal{B}_{S'}, }
\end{equation}
where the horizontal arrows are given by the restriction maps, and the vertical maps are given by $x\mapsto x\otimes \phi_{v}$ and $b\mapsto b\otimes p_{v}$. 

%It is well-known (see  \cite[p. 134]{getzhahn}) that 
%$$\varinjlim_{S}\mathcal{B}_{S}\subseteq C_c(\mathbb{H}), \qquad \varinjlim_{S}\mathcal{X}_{S}\subseteq C_c(\mathbb{G}).$$ 
%{\color{red}BM: It is not obvious that the direct limits are equal to the whole function space $C_{c}(\mathbb{G})$. More straightforwardly we could use this: }
%Since $\mathbb{G}=\sideset{}{'}\prod (G_{v}:K_{v})$ there is a map $\mathbb{G}\to G_{v}$ given by $(g_{i})_{i\in I}\mapsto g_{v}$. Then we obtain $C_{c}(G_v)\to C_{c}(\mathbb{G})$ by setting $f(g_{i})_{i\in I}:=f(g_{v})$. 

For a finite set $S$ we have a map $\otimes_{v\in S} C_{c}(G_{v})\to C_{c}(\mathbb{G})$ defined by $$(\otimes_{v\in S} f_{v}) (g_{i})_{i\in I}:=\prod_{v\in S} f_{v}(g_{v}),\quad f_{v}\in C_{c}(G_{v}).$$
Thus, we can view $\mathcal{X}_{S}$ as a subspace of $C_{c}(\mathbb{G})$ and view $\mathcal{B}_{S}$ as a subalgebra of $C_c(\mathbb{H})$. Now, since  $C_{c}(\mathbb{G})$ is a module over $C_{c}(\mathbb{H})$, we can form the right $C_{c}(\mathbb{H})$-module maps
%\[
%\xymatrix{C_{c}(\mathbb{G}_{S})\ar[r]\ar[d] & C_{c}(\mathbb{H}_{S})\ar[d]\\
%C_{c}(\mathbb{G}_{S'})\ar[r] &C_{c}(\mathbb{H}_{S'}). }
%\]
\begin{equation}
\label{eq:dense-module-maps}
\mathcal{X}_{S}\otimes^{\mathrm{alg}}_{\mathcal{B}_{S}}C_{c}(\mathbb{H})\to C_{c}(\mathbb{G}),\quad x\otimes f\mapsto x{\cdot} f,
\end{equation}
which by \eqref{eq:restriction-compatible} preserve the inner product. These maps therefore extend to the completions and, for any $S\subseteq S'$ yield a commutative diagram
\[\xymatrix{
X_{S}\otimes_{B_{S}}C^{*}(\mathbb{H})\ar[r]\ar[rd]& X_{S'}\otimes_{B_{S'}}C^{*}(\mathbb{H})\ar[d]\\ &X^{\mathbb{G}}_{\mathbb{H}}}\]
of $C^*(\mathbb{H})$-modules.  By Theorem \ref{indnlimit} and the universal property of direct limits, we obtain an inner product preserving right module map
\[\sideset{}{'}\bigotimes_{v}(X^{G_{v}}_{H_{v}},x_{v}) \simeq \varinjlim_{S} X_{S}\otimes_{B_{S}}C^{*}(\mathbb{H})\to X^{\mathbb{G}}_{\mathbb{H}},\]
which has dense range, since the joint range of the maps \eqref{eq:dense-module-maps} is dense in $X^{\mathbb{G}}_{\mathbb{H}}$. This completes the proof. 
%First sketch, details and notation to be updated: Since $(H_{v}:H_{v}\cap K_{v})$ is a Gelfand pair for all but finitely many $v$, every irreducible representation $(\pi,H_{\pi})$ of $\mathbb{H}$ factors as $(\otimes\pi_{v},\otimes (H_{\pi_{v}},h_{v}))$. Then $X\otimes H=\otimes' (X_{v},x_{v})$. Since every $C^{*}$-module embeds isometrically into $B(H,X\otimes_{A}H)$ via $h\mapsto x\otimes h$, for any faithful representation of $A$ on $H$, we find
%\[X\subset B(\bigoplus_{\pi \in\widehat{\mathbb{H}}} H_{\pi}, \bigoplus X\otimes H_{\pi})=B(\bigoplus_{\pi \in\widehat{\mathbb{H}}} \otimes'(H_{\pi_{v}},h_{v}), \bigoplus \otimes '(X_{v},x_{v})\otimes (H_{\pi_{v}},h_{v})),\]
%and unpacking this last embedding establishes the result.
\end{proof}

Now let $(\pi_v, V_v)_v$ be a family of unitary, irreducible representations of $H_v$'s, endowed with $K_v \cap H_v$-invariant vectors $w_v$ for all but finitely many $v$. 
We introduce the element 
\begin{equation} \label{rieffel_chi} \chi_v:=\Psi(\phi_v \otimes_{C^*(H_v)} w) \in L^{G_v}_{H_v}(V_v)
\end{equation}
with $\phi_v$ the distinguished vector in $X^{G_v}_{H_v}$ defined above and $\Psi$ the isometry given in (\ref{isometry}). We have the following immediate corollary of the previous proposition.
\begin{corollary} 
\label{inductionfactor}
We have  
$${\rm Ind}_{\mathbb{H}}^{\mathbb{G}} \left (  \sideset{}{'}{\bigotimes}_{v\in I} \left ( \pi_v, w_v \right ) \right ) \simeq  
\sideset{}{'}{\bigotimes}_{v\in I} \left ( {\rm Ind}_{H_v}^{G_v}(\pi_v), \chi_v \right )$$
as unitary representations of $\mathbb{G}$. 
\end{corollary}
\begin{proof} The claim follows immediately from Proposition \ref{induction-local-global} and Proposition \ref{local-global-compatibility-1}.
\end{proof}

\subsection{Parabolic induction}
Now let $G$ be a reductive linear algebraic group over the integers and $F$ a number field. We introduce subgroups $K_v \subset G(F_v)$ as in Section \ref{adelic_groups}. Fix a parabolic subgroup $P=LN$ of $G$ that we assume is also defined over $F$. Fix a place $v$ of $F$. By an abuse of notation we write $G_v=G(F_v)$, and so on. Note that the parabolic subgroups $P_v\subseteq G_v$ and their Langlands decompositions $P_v=L_vN_v$ are over $F_v$. 

As mentioned above, (local) parabolic induction functor takes as input a unitary representation of $L_v$, extends it trivially to $P_v$ and then applies induction to obtain a unitary representation of $G_v$. In the setting of real reductive groups (that is, for $v$ archimedean), parabolic induction functor was implemented  in \cite{Pierrot-01}  by the $(C^*(G_v),C^*(L_v))$-correspondence 
\begin{equation} \label{pierrot-def} \mathcal{E}^{G_v}_{L_v} := X^{G_v}_{P_v} \otimes_{C^*(P_v)} C^*(L_v),
\end{equation}
where $X^{G_v}_{P_v}$ is Rieffel's induction $C^*$-correspondence and $C^*(L_v)$ is viewed in the standard way as a $(C^*(P_v),C^*(L_v))$-correspondence via the surjection $C^*(P_v) \to C^*(L_v)$ (recall that $L_v \simeq P_v/N_v$).

In \cite{Clare-13}, the correspondence $\mathcal{E}^{G_v}_{H_v}$ was conveniently captured, again in the case of real reductive groups, as a completion of the space of compactly supported continuous functions on the homogeneous space $G_v/N_v$. This description of $\mathcal{E}^{G_v}_{H_v}$ was refined to a $C^*$-correspondence for the reduced group $C^*$-algebras 
$(C^*_r(G_v),C^*_r(L_v))$ and further utilized in \cite{CCH-16,CCH-18}. 

In our approach to parabolic induction, we will employ (\ref{pierrot-def}) instead of the description given in \cite{Clare-13} since the former applies uniformly to all places and builds on Rieffel's induction $C^*$-correspondence that we treated earlier. 

The next result can be seen as a $C^*$-algebraic version of local-global compatibility of the parabolic induction functor.
\begin{proposition} \label{parakjn} Let the setting be as above. We have
$$\mathcal{E}^{G(\A)}_{L(\A)} \simeq   \sideset{}{'}\bigotimes_{v} (\mathcal{E}^{G_v}_{L_v}, \varepsilon_v)$$
where 
$$\varepsilon_v:= \phi_v \otimes_{C^*(P_v)} p_{L_v\cap K_v}.$$
\end{proposition}   
\begin{proof} We have 
\begin{align*} \mathcal{E}^{G(\A)}_{L(\A)} & \simeq X^{G(\A)}_{P(\A)} \otimes_{C^*(P(\A))} C^*(L(\A)) \\
& \simeq \sideset{}{'}\bigotimes_{v} \left ( X^{G_v}_{H_v} \otimes_{C^*(P_v)} C^*(L_v) \right ) \\
\end{align*}
by Corollary \ref{equitotad}, Proposition \ref{IP-commutes-RTP} and Proposition \ref{induction-local-global}.
\end{proof}

\begin{remark}
Proposition \ref{parakjn} raises some questions and problems:
\begin{enumerate}
\item The problem of obtaining results similar to Proposition \ref{parakjn}, in the vein of Langlands- and Bernstein-Zelevinsky classification, for admissible representations would rely on a framework beyond $C^*$--algebras. 
\item Jacquet functors and Bernstein's second adjoint theorem, can be attacked starting from the search for a left inner product. For the real place \cite{CCH-16}, there is a left inner product if we restrict to the reduced group $C^*$-algebra that was used in \cite{CCH-18} for constructing adjoint functors.
\item The problem of characterizing the unitary dual of $G(\A)$, or already the reduced unitary dual, is by Remark \ref{adjnakdjnadkjn}, equivalent to the characterizing the unitary duals in all the local cases. It is not clear to the authors if elegant work such as \cite{afgoaub,CCH-16} can be emulated and compute the reduced unitary dual directly in the global case of $G(\A)$.
\end{enumerate}
\end{remark}

%%%%%%%%%%%%%%%%%%%%
%%%%%%%%%%%%%%%%%%


\begin{thebibliography}{STF}

\bibitem{afgoaub} A. Afgoustidis, and A.-M. Aubert, \emph{$C^*$-blocks and crossed products for classical $p$-adic groups}, Int. Math. Res. Not. IMRN 2022, no. 22, 17849--17908.

\bibitem{Baum_etal} P. Baum, S. Millington and R. Plymen. \emph{A proof of the Baum-Connes conjecture for reductive adelic groups.} C. R. Acad. Sci. Paris S\'er. I Math. 332 (2001), no. 3, 195--200.

\bibitem{BergmannConti} W.R. Bergmann, and R. Conti, \emph{On infinite tensor products of Hilbert $C^{*}$-bimodules}, in Operator algebras and mathematical physics: conference proceedings, Constanta (Romania), July 2-7, 2001. Editors J.-M. Combes, J. Cuntz, G.A. Elliott, G. Nenciu, H. Siedentop and S. Stratila. Theta Foundation, Bucharest (2003), 23-34.  

\bibitem{BZind} I. N. Bernstein, and A. V. Zelevinsky, \emph{Induced representations of reductive $\mathfrak{p}$-adic groups. I.} Ann. Sci. École Norm. Sup. (4) 10 (1977), no. 4, 441--472. 

\bibitem{Blackadar} B. E. Blackadar. \emph{Infinite tensor products of $C^*$-algebras.} Pacific J. Math. 72 (1977), no. 2, 313--334. 

\bibitem{Clare-13} P. Clare, \emph{Hilbert modules associated to parabolically induced representations}, J. Operator Theory 69 (2013), no. 2, 483--509.

\bibitem{CCH-16} P. Clare, T. Crisp, and N. Higson, \emph{Parabolic induction and restriction via $C^*$-algebras and Hilbert $C^*$-modules}, Compos. Math. 152 (2016), no. 6, 1286--1318.

\bibitem{CCH-18} P. Clare, T. Crisp, and N. Higson, \emph{Adjoint functors between categories of Hilbert $C^\ast$-modules}. J. Inst. Math. Jussieu 17 (2018), no. 2, 453--488.


%\bibitem{CH-17} T. Crisp, and N. Higson, \emph{A second adjoint theorem for ${\rm SL}(2,\mathbb{R})$}, Around Langlands correspondences, 73--101, Contemp. Math., 691, Amer. Math. Soc., Providence, RI, 2017.


\bibitem{vanDijk_book} G.  van Dijk. \emph{Introduction to harmonic analysis and generalized Gelfand pairs.} De Gruyter Studies in Mathematics, 36. Walter de Gruyter \& Co., Berlin, 2009. 

\bibitem{Flath} D. Flath. \emph{Decomposition of representations into tensor products.} in ``Automorphic forms, representations and L-functions'' (Proc. Sympos. Pure Math., Oregon State Univ., Corvallis, Ore., 1977), Part 1, pp. 179--183, Proc. Sympos. Pure Math., XXXIII, Amer. Math. Soc., Providence, RI, 1979.

\bibitem{book_Folland} G. B. Folland.  \emph{A course in abstract harmonic analysis.} Studies in Advanced Mathematics. CRC Press, Boca Raton, FL, 1995

\bibitem{Gelfand_etal} I.M. Gel'fand, M.I. Graev and I.I. Pyatetskii-Shapiro. \emph{Representation theory and automorphic functions.} Translated from the Russian by K. A. Hirsch. W. B. Saunders Co., Philadelphia, Pa.-London-Toronto, Ont., 1969.

\bibitem{getzhahn}  J. R. Getz, and H. Hahn, \emph{An Introduction to Automorphic Representations, With a view toward trace formulae}, Graduate Texts in Mathematics, Springer Nature Switzerland AG 2024, DOI: https://doi.org/10.1007/978-3-031-41153-3

\bibitem{glimm61} J. Glimm, \emph{Type I $C\sp{\ast} $-algebras}, Ann. of Math. (2) 73 (1961), 572--612. 

\bibitem{GMSTheta} M. Goffeng, B. Mesland, and M. H. \c{S}eng\"un, \emph{Theta correspondence via group $C^*$-algebras}, Arxiv \url{https://arxiv.org/abs/2412.07501}.

\bibitem{Guichardet} A. Guichardet. \emph{Tensor products of $C^*$-algebras. Part II : Infinite tensor products.} Math. Inst. Aarhus Univ. Lecture Notes No. 13 1969 \href{https://www.fuw.edu.pl/~kostecki/scans/guichardet1969pt2.pdf}{Link.}

\bibitem{harcha} Harish-Chandra, \emph{Representations of a semisimple Lie group on a Banach space. I}, Trans. Amer. Math. Soc. 75 (1953), 185--243. 

\bibitem{lancebook} E. C. Lance, \emph{Hilbert $C^*$-modules. A toolkit for operator algebraists}, London Mathematical Society Lecture Note Series, 210. Cambridge University Press, Cambridge, 1995. 

\bibitem{knapprep} A. W. Knapp, \emph{Representation theory of semisimple groups. An overview based on examples}, Princeton Mathematical Series, 36. Princeton University Press, Princeton, NJ, 1986. 

\bibitem{MS-24} B. Mesland and M.H. \c{S}eng\"un. \emph{Equal rank local theta correspondence as a strong Morita equivalence.} Selecta Math. (N.S.) 30 (2024), no. 4, Paper No. 72.

\bibitem{Moore} C.C. Moore, \emph{Decomposition of unitary representations defined by discrete subgroups of nilpotent groups.} 
Ann. of Math. (2) 82 (1965), 146--182.

\bibitem{mvn} F. J. Murray, and J. Von Neumann, \emph{On rings of operators}, Ann. of Math. (2) 37 (1936), no. 1, 116--229.

\bibitem{vonNeumann} J. von Neumann. \emph{On infinite direct products.} Compositio Math. 6 (1939), 1--77.

\bibitem{patte} A. L. T. Paterson, \emph{The class of locally compact groups $G$ for which $C^*(G)$ is amenable}, Harmonic analysis (Luxembourg, 1987), 226--237, Lecture Notes in Math., 1359, Springer, Berlin, 1988.

\bibitem{Pierrot-01} F. Pierrot, \emph{Induction parabolique et $K$-th\'eorie de $C^*$-alg\'ebres maximales.} C. R. Acad. Sci. Paris Ser. I Math. 332 (2001), no. 9, 805--808.

\bibitem{Rieffel-74} M. A. Rieffel, \emph{Induced representations of $C^*$-algebras.} Adv. Math. {\bf 13} 176--257 (1974).

\bibitem{Segal-47} I. E. Segal, \emph{The group algebra of a locally compact group.} Trans. Amer. Math. Soc. 61, 69--105 (1947).

\bibitem{Segal-50} I. E. Segal, \emph{An extension of Plancherel’s formula to separable unimodular groups.} Ann. of Math. 52, 272--292 (1950).

\bibitem{Tadic} M. Tadi\'c. \emph{Dual spaces of adelic groups.} Glas. Mat. Ser. III 19(39) (1984), no. 1, 39--48.

\bibitem{Tits} J. Tits. \emph{Reductive groups over local fields}, in ``Automorphic forms, representations and L-functions'' (Proc. Sympos. Pure Math., Oregon State Univ., Corvallis, Ore., 1977), part 1, p. 29--69. Proc. Sympos. Pure Math., XXXIII, Amer. Math. Soc., Providence, RI, 1979.

\bibitem{wallachbook} N. R. Wallach, \emph{Real reductive groups. I.}, Pure and Applied Mathematics, 132. Academic Press, Inc., Boston, MA, 1988.



\end{thebibliography}
\end{document}